\DeclareFontFamily{OMX}{MnSymbolE}{}
\DeclareSymbolFont{MnLargeSymbols}{OMX}{MnSymbolE}{m}{n}
\DeclareFontShape{OMX}{MnSymbolE}{m}{n}{
    <-6>  MnSymbolE5
   <6-7>  MnSymbolE6
   <7-8>  MnSymbolE7
   <8-9>  MnSymbolE8
   <9-10> MnSymbolE9
  <10-12> MnSymbolE10
  <12->   MnSymbolE12
}{}
\DeclareFontShape{OMX}{MnSymbolE}{b}{n}{
    <-6>  MnSymbolE-Bold5
   <6-7>  MnSymbolE-Bold6
   <7-8>  MnSymbolE-Bold7
   <8-9>  MnSymbolE-Bold8
   <9-10> MnSymbolE-Bold9
  <10-12> MnSymbolE-Bold10
  <12->   MnSymbolE-Bold12
}{}
\let\llangle\@undefined
\let\rrangle\@undefined
\DeclareMathDelimiter{\llangle}{\mathopen}%
                     {MnLargeSymbols}{'164}{MnLargeSymbols}{'164}
\DeclareMathDelimiter{\rrangle}{\mathclose}%
                     {MnLargeSymbols}{'171}{MnLargeSymbols}{'171}
\def\at#1#2{
#1\hskip 0.25ex\vline_{\hskip 0.25ex\raisebox{-1.5ex}
{{$\scriptstyle#2$}}}}
\def\textmap#1{\mathop{\vbox{\ialign{
                                  ##\crcr
      ${\scriptstyle\hfil\;\;#1\;\;\hfil}$\crcr
      \noalign{\kern 1pt\nointerlineskip}
      \rightarrowfill\crcr}}\;}}
\def\bigtextmap#1{\mathop{\vbox{\ialign{
                                  ##\crcr
      ${\hfil\;\;#1\;\;\hfil}$\crcr
      \noalign{\kern 1pt\nointerlineskip}
      \rightarrowfill\crcr}}\;}}
\def\textlmap#1{\mathop{\vbox{\ialign{
                                  ##\crcr
      ${\scriptstyle\hfil\;\;#1\;\;\hfil}$\crcr
      \noalign{\kern-1pt\nointerlineskip}
      \leftarrowfill\crcr}}\;}}
\def\at#1#2{
#1\hskip 0.25ex\vline_{\hskip 0.25ex\raisebox{-1.5ex}
{{$\scriptstyle#2$}}}}
\def\textmap#1{\mathop{\vbox{\ialign{
                                  ##\crcr
      ${\scriptstyle\hfil\;\;#1\;\;\hfil}$\crcr
      \noalign{\kern 1pt\nointerlineskip}
      \rightarrowfill\crcr}}\;}}
\def\bigtextmap#1{\mathop{\vbox{\ialign{
                                  ##\crcr
      ${\hfil\;\;#1\;\;\hfil}$\crcr
      \noalign{\kern 1pt\nointerlineskip}
      \rightarrowfill\crcr}}\;}}
\def\textlmap#1{\mathop{\vbox{\ialign{
                                  ##\crcr
      ${\scriptstyle\hfil\;\;#1\;\;\hfil}$\crcr
      \noalign{\kern-1pt\nointerlineskip}
      \leftarrowfill\crcr}}\;}}
\def\R{{\mathbb R}}
\def\Z{{\mathbb Z}}
\def\ig{{\mathfrak i}}
\def\kg{{\mathfrak k}}
\theoremstyle{remark}
\theoremstyle{plain}
\newtheorem{sz}{Satz}[section]
\newtheorem{thry}[sz]{Theorem}
\newtheorem{pr}[sz]{Proposition}
\newtheorem{co}[sz]{Corollary}
\newtheorem{dt}[sz]{Definition}
\newtheorem{lm}[sz]{Lemma}
\theoremstyle{remark}
\newtheorem{re}{Remark}
\theoremstyle{plain}
\def\End{\mathrm {End}}
\def\U{\mathrm{U}}
\def\id{ \mathrm{id}}
\def\ad{\mathrm {ad}}
\def\Diff{\mathrm {Diff}}
\def\U2{\mathrm{U(2)}}
\def\niq{=\kern-.18cm /\kern.08cm}
\def\ad{\mathrm{ad}}
\newcommand{\cal}{\mathcal}
\begin{document}

\title[Infinitesimal isometries of connection metric]{Infinitesimal isometries of connection metric and generalized moment map equation}
\author{Arash Bazdar}
\address{Aix Marseille Université, CNRS, Centrale Marseille, I2M, UMR 7373, 13453 Marseille, France}
\email{arash.bazdar@univ-amu.fr}

\begin{abstract}
Let $(M,g)$ be a smooth Riemannian manifold, $K$ a compact Lie group and $p:P\to M$ a principal $K$-bundle over $M$ endowed with a connection $A$. Fixing a bi invariant inner product on Lie algebra $\kg$ of $K$, the connection $A$ and metric $g$ define a Riemannian metric $g_A$ on $P$. Let $\tilde X$ be the horizontal lift of vector field $X$ on $M$ and, let $\xi^\nu$ be the vertical field associated with section $\nu\in A^0(\ad( P))$ of the adjoint bundle. It is proved that the connection $A$ is invariant under the 1-parameter group of local diffeomorphism generated by $\tilde X+\xi^\nu$ if and only if $X$ and $\nu$ satisfy the generalized moment map equation $\iota_XF_A=-\nabla^A\nu$. The Lie algebra of fiber preserving Killing fields of $(P,g_A)$ is studied, in the case where $K$ is compact, connected and semisimple. 
\end{abstract}
\maketitle
\section{Lifts associated to a section of the adjoint bundle }\label{introd}
\subsection{Introduction.} \label{TheProblem} 
Let $(M,g)$ be a differentiable Riemannian manifold and $K$ be a compact Lie group. Let $p:P\to M$ be a principal $K$-bundle on $M$ and $A$ a connection on $P$. The same notation $A$ is used to denote the horizontal distribution associated to connection $A$. Fixing an inner product $\llangle\cdot,\cdot\rrangle$ on the Lie algebra $\kg$ of $K$ we can define a Riemannian metric $g_A$ on $P$ characterized by the following condition 
\begin{enumerate}
\item If $V_P:=\ker p_*$ denotes the vertical bundle, then the canonical bundle isomorphism $V_P\simeq P\times \kg$ is an orthogonal bundle isomorphism with respect to the inner products defined by $g_A$ and $\llangle\cdot,\cdot\rrangle$.
\item The restriction of $p_*:T_P\to T_M$ to the horizontal subbundle $A\subset T_P$ gives an orthogonal bundle isomorphism $A\to p^*(T_M)$.
\item By the definition, the horizontal distribution $A\subset T_P$ is $K$-invariant and the following short exact sequence is splitting. 
$$
\{0\}\to V_P\to T_P\to A\to \{0\}.
$$
We require that the direct sum decomposition $T_P\simeq A\oplus V_P$ of the tangent bundle $T_P$ is $g_A$-orthogonal. 

\end{enumerate}
The metric $g_A$ on $P$ defined in this way is called the connection metric and defines a Riemannian submersion $p:(P,g_A)\to (M,g)$ with totally geodesic fibers \cite{Vi}. Moreover, if $g$ is a complete metric on $M$, then the connection metric $g_A$ is also complete \cite{Zi}. We use $I(M)$ to denote the group of isometries of $(M,g)$. It is well known that the isometry group of a compact manifold is a finite dimensional Lie group \cite{KN}. The group of $K$-equivariant covering bundle isomorphisms of $P$ is defined by
$$I_K(P):=\{(\Phi,\varphi)|\ \varphi\in I(M),\ \Phi:P\to P\hbox{ is a $\varphi$-covering bundle isomorphism}\}.
$$  
The group $I_K(P)$ has a natural  topology (induced by the weak ${\cal C}^\infty$-topology \cite[section 2.1]{Hi}).
A map $\Phi:P\to P$ is called $\varphi$-covering isomorphism of principal $K$-bundles, if it is a $K$-equivariant diffeomorphism of $P$ such that the induced map $\varphi:M\to M$ on the base manifold is a diffeomorphism satisfying $\, p\circ \Phi= \varphi \circ p$. Let $\Diff^A_K(P)$ denote the space of $K$-equivariant diffeomorphism of $P$ leaving connection $A$ invariant, $I(P)$ the group of isometries of $(P,g_A)$ and  $I^A_K(P)$ the stabilizer of connection $A$ in the group $I_K(P)$, then it is not difficult to prove that (Lemma \ref{RaisomP}) 
$$ 
I^A_K(P)=\Diff^A_K(P)\cap I(P).
$$
 This Lie group play an important role in theory of locally homogeneous triples introduced in \cite {Ba}. A bundle map $\Phi:P\to P$ is called fibre preserving if for all $x\in M$, it restricts to smooth map $\Phi_x:P_x\to P_{\Phi(x)}$. If   $I_V(P)$ denotes the Lie group of all fiber preserving isometries of $(P,g_A)$ then we have $I^A_K(P)\subset I_V(P)\subset I(P)$. Use $\mathfrak i_K(P)$, $\mathfrak i_V(P)$ and $\mathfrak i(P)$ to denote the Lie algebras of $I^A_K(P)$, $I_V(P)$ and $I(P)$. Our goal is to describe the Lie algebra $\mathfrak i_V(P)$ of all fiber preserving infinitesimal isometries of $(P,g_A)$. If $K$ is compact, connected and semisimple, we will give a complete description of the structure of $\mathfrak i_V(P)$. If $P$ is compact, $I(P)$ is a finite dimensional Lie group and therefore $I^A_K(P)$ and $I_V(P)$ is also a finite dimensional. 
%
%
%
Any element of the Lie algebra $\mathfrak i_V(P)$ is an infinitesimal isometry (Killing vector field) of total space $P$ with respect to metric $g_A$. 
 \begin{dt} \label{HorKill}
 A vector field $Y\in {\cal X}(P)$ on a principal $K$-bundle $P$ is called fiber preserving if, the 1-parameter group of local diffeomorphisms generated by $Y$ maps each fiber into another fiber.   \end{dt}
\begin{re}
A vector field $Y\in {\cal X}(P)$ is fiber preserving if and only if the Lie bracket $[Y,Z]$ is vertical for any vertical vector field $Z\in{\cal X}^v(P)$.
 \end{re}
Let us recall some basic facts about infinitesimal isometries (Killing vector fields). A vector field $X\in {\cal X}(M)$ on Riemannian manifold $(M,g)$ is called a Killing vector field if, the $1$-parameter group of local diffeomorphisms generated by it in a neighborhood of each point of $M$ are the local isometries. This is also equivalent to the following conditions
\begin{enumerate}
\item $L_Xg=0$\,(where $L_X$ denotes the Lie derivative with respect to $X$).
\item For all vector fields $X_1,X_2\in {\cal X}(M)$
$$Xg(X_1,X_2)=g([X,X_1],X_2)+g(X_1,[X,X_2]).$$
\end{enumerate}
For a compact manifold $M$, the set of all Killing vector fields on $M$ is a finite dimensional Lie subalgebra of ${\cal X}(M)$ and can be considered as the Lie algebra of group of isometries $I(M)$ and will be denoted by $\mathfrak{i}(M)$. 
\subsection{ Connections invariant by lifts}
Let $M$ be a smooth manifold of dimension $\dim M=n$. A rank $r$ distribution on $M$ is a subbundle $D\subset T_M$ of the tangent bundle with constant rank $r$. In other words $D$ is a smooth distribution if and only if for any fixed point $x_0\in M$, there exist a family $\{X_1,...,X_r\}$ of vector fields defined on some neighborhood $U$ of $x_0$ such that $\{X_1(x),...,X_r(x)\}$ is a basis for $D_x$ at any point $x\in U$. We have the next result in the theory of distribution.
\begin{lm}\label{Distributioninvariant}
Let $M$ be a smooth $n$-dimensional manifold. Let $D\subset T_M$ be a rank $r$ distribution on $M$ and let $X$ be a vector field on $M$.  If for every vector field $Y\in\Gamma(D)$ one has $[X,Y]\in\Gamma(D)$, then the distribution $D$ is invariant under the $1$-parameter group of local diffeomorphism generated by $X$.
\end{lm}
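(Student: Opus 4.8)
The plan is to reduce the statement to a linear ODE, using the standard relation between the Lie bracket and the flow of $X$. Write $\phi_t$ for the local flow of $X$; the goal is to show that $(\phi_t)_* D_x = D_{\phi_t(x)}$ for every $x\in M$ and every $t$ for which $\phi_t(x)$ is defined. Fix $x_0\in M$. Since $[X,fY]=(Xf)\,Y+f\,[X,Y]$ for $f\in C^\infty(M)$, the condition ``$[X,Y]\in\Gamma(D)$ for all $Y\in\Gamma(D)$'' is local and holds as soon as $[X,Y_i]\in\Gamma(D)$ for the members of some local frame of $D$. So I would fix a smooth frame $Y_1,\dots,Y_r$ of $D$ on a neighbourhood $U$ of $x_0$ and write
$$[X,Y_i]=\sum_{j=1}^{r}a_i^j\,Y_j\qquad(i=1,\dots,r),$$
with $a_i^j\in C^\infty(U)$ (automatically smooth, being the coefficients of a smooth section in a smooth frame). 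The point to keep in mind is that only $Y_1,\dots,Y_r$ appear on the right — which is exactly what the hypothesis provides.

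Next I would shrink $U$ and choose $\varepsilon>0$ so that $\phi$ is defined and smooth on $(-\varepsilon,\varepsilon)\times U$ with $\phi_t(x_0)\in U$ for $|t|<\varepsilon$, and for such $t$ consider
$$Z_i(t):=\big((\phi_t)^*Y_i\big)(x_0)=(\phi_{-t})_*\big(Y_i(\phi_t(x_0))\big),$$
a curve in the \emph{fixed} vector space $T_{x_0}M$. From $\frac{d}{dt}(\phi_t)^*Y=(\phi_t)^*(L_XY)=(\phi_t)^*[X,Y]$ and the displayed formula for $[X,Y_i]$ one gets
$$\dot Z_i(t)=\big((\phi_t)^*[X,Y_i]\big)(x_0)=\sum_{j=1}^{r}a_i^j\big(\phi_t(x_0)\big)\,Z_j(t).$$
This is a linear system of ODEs in $T_{x_0}M$ with smooth time-dependent coefficients, \emph{closed} among $Z_1,\dots,Z_r$; hence $Z_i(t)=\sum_{j=1}^{r}G_i^j(t)\,Z_j(0)$ with $G(t)$ the fundamental matrix solution, $G(0)=I_r$, which is invertible for every $|t|<\varepsilon$. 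Since $Z_i(0)=Y_i(x_0)$ span $D_{x_0}$, invertibility of $G(t)$ gives $\mathrm{span}\big(Z_1(t),\dots,Z_r(t)\big)=D_{x_0}$ for all $|t|<\varepsilon$.

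Translating back, $Z_i(t)=(\phi_{-t})_*\big(Y_i(\phi_t(x_0))\big)$ and $Y_1,\dots,Y_r$ frame $D$ over $U$, so (as $\phi_t(x_0)\in U$)
$$D_{x_0}=\mathrm{span}_i\,Z_i(t)=(\phi_{-t})_*\,D_{\phi_t(x_0)},$$
i.e. $(\phi_t)_*\,D_{x_0}=D_{\phi_t(x_0)}$ for $|t|<\varepsilon$. For arbitrary $x_0$ a standard continuation argument then promotes this to all admissible $t$: the subset of the (open, connected) interval of definition of $t\mapsto\phi_t(x_0)$ on which the identity holds is nonempty, open (apply the local result at $\phi_{t_0}(x_0)$, using $\phi_t=\phi_{t-t_0}\circ\phi_{t_0}$), and closed (continuity of the flow and of $D$), hence the whole interval. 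As $x_0$ is arbitrary, $D$ is invariant under the flow of $X$. I expect the only substantive point to be the reduction to the closed linear system inside the fixed space $T_{x_0}M$ — recognizing that the hypothesis is precisely what forces the coefficient matrix to involve only the indices $1,\dots,r$, so that $\mathrm{span}(Z_i(t))$ cannot escape $D_{x_0}$. The bracket identities, the management of flow domains, and the continuation in $t$ are routine.
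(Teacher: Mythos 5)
Your proof is correct, but it runs dual to the paper's. The paper works with the \emph{annihilator} of $D$: it picks independent $1$-forms $\omega_1,\dots,\omega_{n-r}$ spanning $\mathrm{Ann}(D)$ over $U$ and, for a section $Y\in\Gamma(D)$, shows that $t\mapsto\big((\gamma_t)^*\omega_j\big)(Y_x)$ has vanishing derivative, because $(L_X\omega_j)(Y)=X(\omega_j(Y))-\omega_j([X,Y])=0$ by the hypothesis; constancy (with value $0$ at $t=0$) then keeps the pushed-forward vectors inside $D$. You instead work with a local \emph{frame} $Y_1,\dots,Y_r$ of $D$ itself, encode the hypothesis as $[X,Y_i]=\sum_j a_i^j Y_j$, and turn the flow identity $\frac{d}{dt}(\phi_t)^*Y=(\phi_t)^*[X,Y]$ into a closed linear ODE system for the pulled-back frame in the fixed space $T_{x_0}M$, concluding via invertibility of the fundamental matrix that the span is preserved, and then propagating by an open--closed argument along the maximal flow interval. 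Both arguments hinge on the same Lie-derivative/flow relation; the paper's annihilator version is slightly shorter since it only needs constancy of scalar functions rather than solving (or citing the structure of) a linear ODE, while yours delivers directly the pointwise equality $(\phi_t)_*D_{x}=D_{\phi_t(x)}$ rather than the statement about pushforwards of sections, handles the passage from a flow-box interval to all admissible times explicitly (the paper leaves this implicit), and sidesteps the minor imprecision in the paper's computation where $(L_X\omega_j)$ is evaluated on the transported vector $Y_t$ versus the field value $Y_{\gamma_t(x)}$. Your localization remark via the Leibniz rule (to justify applying the hypothesis to the local frame fields) is the standard bump-function point and is fine.
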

\begin{proof}
Fix $x_0\in M$ and choose an open neighborhood $U\subset M$ of $x_0$ and $\epsilon>0$ such that $(-\epsilon,\epsilon)\times U$ is contained in the maximal domain of definition of the flow of the vector field $X$.  We may suppose that the subbundle $D_U$ is trivial over $U$. Hence, there exists a family $\{X_1,...,X_r\}$ of smooth vector fields defined on $U$ such that $\{X_1(x),...,X_r(x)\}$ is a basis for $D_x$ at any point $x\in U$. Let $\gamma_t$ denotes the flow of the vector field $X$. We need just to prove that for every $Y\in\Gamma(D)$, the local field $Y_t:=(\gamma_t)_*Y_{x}$ defined at every $(t,x)\in(-\epsilon,\epsilon)\times U$ is a section of $\Gamma(D)$.
Let $\{\omega_1,...,\omega_{n-r}\}$ be a family of independent $1$-forms on $U$ which define a basis of the annihilator of $\Gamma(D_U)$. Consider the functions
\begin{equation}\label{DLFORHA}
\varphi_j(t):=((\gamma_t)^*\omega_j)(Y_{x})=(\omega_j)_{\gamma_t(x)}(Y_t).
\end{equation}
Taking derivation with respect to $t$, we obtain
$$\frac{d\varphi_j(t)}{dt}=(L_X\omega_j)_{\gamma_t(x)}(Y_t)=X_{\gamma_t(x)}(\omega_j(Y))-\omega_j([X,Y])_{\gamma_t(x)}.
$$
Since $Y\in\Gamma(D)$ and $[X,Y]\in\Gamma(D)$, we have $\omega_j(Y)=\omega_j([X,Y])=0$. Hence $  d\varphi_j(t)/{ dt}=0$ which means that $\varphi_j(t)$ is constant. As $\varphi_j(0)=0$, the right hand side of (\ref{DLFORHA}) is vanishes and we have $Y_t\in\Gamma(D)$.
\end{proof}
\begin{pr}\label{InvariantLIFT}
Let $p:P\to M$ be a principal $K$-bundle over $n$-dimensional manifold $M$. For a vector field $Y\in {\cal X}(P)$  the following are equivalent
\begin{enumerate}
\item The horizontal distribution $A\subset T_P$ is invariant under the $1$-parameter group of local diffeomorphism generated by $Y$.
\item For every horizontal field $Z\in{{\cal X}}^h(P)$, the Lie bracket $[Y,Z]$ is horizontal. 
\item For every horizontal lift $\tilde X\in{\cal X}^h(P)$ of vector field $X\in {\cal X}(M)$, the Lie bracket $[Y,\tilde X]$ is horizontal. 
\end{enumerate}
\end{pr}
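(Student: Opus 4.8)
The plan is to establish the four implications $(2)\Rightarrow(1)$, $(1)\Rightarrow(2)$, $(2)\Rightarrow(3)$ and $(3)\Rightarrow(2)$, which together give $(1)\Leftrightarrow(2)\Leftrightarrow(3)$. The first of these is nothing more than Lemma \ref{Distributioninvariant} applied to the distribution $D=A\subset T_P$ and to the vector field $X=Y$: hypothesis (2) asserts exactly that $[Y,Z]\in\Gamma(A)={\cal X}^h(P)$ for every $Z\in{\cal X}^h(P)$, so the lemma yields invariance of $A$ under the flow of $Y$.

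For $(1)\Rightarrow(2)$ I would argue via the flow. Let $\gamma_t$ be the local flow of $Y$ and $Z\in{\cal X}^h(P)$; fix $q\in P$. Invariance of $A$ means that $(d\gamma_{-t})_{\gamma_t(q)}$ maps $A_{\gamma_t(q)}$ onto $A_q$, so the curve $t\mapsto \bigl((\gamma_t)^*Z\bigr)_q=(d\gamma_{-t})_{\gamma_t(q)}\bigl(Z_{\gamma_t(q)}\bigr)$ lies entirely in the fixed linear subspace $A_q\subset T_qP$. Since $[Y,Z]_q=(L_YZ)_q=\frac{d}{dt}\big|_{t=0}\bigl((\gamma_t)^*Z\bigr)_q$ is the derivative at $0$ of this curve, it belongs to the (closed) subspace $A_q$; as $q$ is arbitrary, $[Y,Z]$ is horizontal.

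The implication $(2)\Rightarrow(3)$ is immediate, since a horizontal lift is in particular a horizontal vector field. The substantive step is $(3)\Rightarrow(2)$, and here I would localize: horizontality of a vector field is a pointwise condition, so it suffices to check that $[Y,Z]$ is horizontal over $p^{-1}(U)$ for $U\subset M$ ranging over open sets trivializing $T_M$. Over such a $U$ choose a local frame $X_1,\dots,X_n$ of $T_M$; the horizontal lifts $\tilde X_1,\dots,\tilde X_n$ then frame $A$ over $p^{-1}(U)$, so any $Z\in{\cal X}^h(P)$ can be written there as $Z=\sum_i f_i\tilde X_i$ with $f_i\in{\cal C}^\infty(p^{-1}(U))$. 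By the Leibniz rule for the Lie bracket,
$$[Y,Z]=\sum_i (Yf_i)\,\tilde X_i+\sum_i f_i\,[Y,\tilde X_i],$$
and the right-hand side is horizontal: the first sum is a combination of the $\tilde X_i$, and each $[Y,\tilde X_i]$ is horizontal by (3).

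The only wrinkle is that (3) is phrased for globally defined $X\in{\cal X}(M)$, whereas the frame $X_i$ is only local; I would dispose of this by a bump-function argument. To test $[Y,\tilde X_i]$ near a point $q\in p^{-1}(U)$, pick $\chi\in{\cal C}^\infty(M)$ supported in $U$ with $\chi\equiv 1$ near $p(q)$; then $\chi X_i$ extends by zero to an element of ${\cal X}(M)$, its horizontal lift is $(p^*\chi)\,\tilde X_i$, and $[Y,(p^*\chi)\tilde X_i]=(Y(p^*\chi))\,\tilde X_i+(p^*\chi)\,[Y,\tilde X_i]$ coincides with $[Y,\tilde X_i]$ on the open set $\{p^*\chi=1\}$ containing $q$, where (3) provides horizontality. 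I do not expect a genuine obstacle anywhere: the real content is the elementary Leibniz identity for $(3)\Rightarrow(2)$ together with the observation in $(1)\Rightarrow(2)$ that a derivative of a curve valued in a linear subspace stays in that subspace; everything else is bookkeeping.
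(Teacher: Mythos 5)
Your proposal is correct and follows essentially the same route as the paper: Lemma \ref{Distributioninvariant} for the implication $(2)\Rightarrow(1)$, and the local-frame-plus-Leibniz computation for $(3)\Rightarrow(2)$. The only difference is that you supply two details the paper elides: an explicit argument for $(1)\Rightarrow(2)$ (the curve $t\mapsto\bigl((\gamma_t)^*Z\bigr)_q$ stays in the fixed subspace $A_q$, so its derivative $[Y,Z]_q$ does too), whereas the paper simply cites the lemma, which as stated only gives the converse direction; and the bump-function step reconciling the local frame $X_1,\dots,X_n$ with the fact that $(3)$ is phrased for globally defined fields on $M$. Both additions are sound and tighten the argument without changing its substance.
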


\begin{proof}
$(1)\Leftrightarrow  (2)$ follows by Lemma \ref{Distributioninvariant}. The implication $(2) \Rightarrow (3)$ is evident. To prove the implication $(3) \Rightarrow (2)$, let $Z\in {\cal X}^h(P)$, so it is a section of horizontal distribution $A\subset T_P$. There exists an open set $U\subset M$, a family of horizontal lifts $\tilde X_1,...,\tilde X_n$ on $P_U$ and a family of maps $\varphi_1,...,\varphi_n$ such that $Z=\varphi_1\tilde X_1+...+\varphi_n\tilde X_n$. So, we have
\begin{equation*}
[Y,Z]=[Y, \sum _{i=1}^n\varphi_i\tilde X_i]\\=\sum_{i=1}^n (Y(\varphi_i) \tilde X_i+\varphi_i[Y,\tilde X_i]).
\end{equation*}
Since $\tilde X_i\in{\cal X}^h(P)$ and $[Y,\tilde X_i]\in{\cal X}^h(P)$  we have $[Y,Z]\in{\cal X}^h(P)$.
\end{proof}
\begin{pr}\label{alphaAtiez}
  Let $p:P\to M$ be a principal $K$-bundle on $M$ endowed with a connection  $A$. If $a^\#$ denotes the fundamental field corresponding to $a\in\kg$ and  $\tilde X$ is the horizontal lift of $X\in{\cal X}(M)$, then the vector field $a^\#$ is invariant under the $1$-parameter group of local diffeomorphism generated by $\tilde{X}$.
\end{pr}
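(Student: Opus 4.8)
The plan is to prove the equivalent infinitesimal statement $[\tilde X,a^\#]=0$; the only real ingredient is the $K$-equivariance of the horizontal lift. First I would record that $\tilde X$ is right-invariant, i.e.\ $(R_k)_*\tilde X = \tilde X$ for every $k\in K$, where $R_k:P\to P$ denotes the principal right action. Indeed, $\tilde X_u$ is by construction the unique vector of the horizontal subspace $A_u$ with $p_*\tilde X_u = X_{p(u)}$; since the horizontal distribution $A$ is $K$-invariant, $(R_k)_*\tilde X_u$ lies in $A_{u\cdot k}$, and from $p\circ R_k = p$ one gets $p_*\big((R_k)_*\tilde X_u\big) = p_*\tilde X_u = X_{p(u)} = X_{p(u\cdot k)}$, so uniqueness of the horizontal lift forces $(R_k)_*\tilde X_u = \tilde X_{u\cdot k}$.

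Next, the (globally defined) flow of the fundamental field $a^\#$ is $\psi_t = R_{\exp(ta)}$. By the previous paragraph each $\psi_t$ preserves $\tilde X$, hence commutes with the local flow $\gamma_s$ of $\tilde X$ on their common domain of definition (a diffeomorphism preserving a vector field commutes with that field's flow). Consequently, for $u\in P$ and $s$ small,
$(\gamma_s)_*\big(a^\#_u\big) = \tfrac{d}{dt}\big|_{t=0}\,\gamma_s\big(u\cdot\exp(ta)\big) = \tfrac{d}{dt}\big|_{t=0}\,\big(\gamma_s(u)\big)\cdot\exp(ta) = a^\#_{\gamma_s(u)},$
which is exactly the assertion that $a^\#$ is invariant under the $1$-parameter group of local diffeomorphisms generated by $\tilde X$; differentiating in $s$ at $s=0$ also yields $[\tilde X,a^\#]=0$.

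I do not expect a serious obstacle here: the argument is entirely local, so incompleteness of $\tilde X$ when $P$ is noncompact is irrelevant, and the metric $g_A$ plays no role. The one point to handle with care is the first step, namely the $K$-invariance of $\tilde X$ — this is precisely where the defining property of a principal connection (invariance of the horizontal distribution under the right action) is used — together with keeping the composition order of $\gamma_s$ and $R_k$, and the variance/sign conventions for pushforwards and Lie derivatives, straight in the second step.
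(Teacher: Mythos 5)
Your proposal is correct and follows essentially the same route as the paper: both arguments rest on the right-invariance $(R_k)_*\tilde X=\tilde X$ of the horizontal lift and on the fact that the flow of $a^\#$ is $R_{\exp(ta)}$. The only (minor, presentational) difference is that you prove the right-invariance explicitly and conclude by commuting the two flows directly, whereas the paper computes the Lie-derivative limit $[a^\#,\tilde X]=\lim_{t\to0}\frac1t(\tilde X-(R_{\exp(ta)})_*\tilde X)=0$ and then passes from the vanishing bracket to invariance of $a^\#$ under the flow of $\tilde X$.
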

\begin{proof}
The vector field $a^\#$ at $y\in P$ is defined by
$$
a^\#_y:=\at{\frac{d}{dt}}{t=0} (y \exp ta ).
$$
Let $(\beta_t)$ be the $1$-parameter group of local diffeomorphism generated by $\tilde{X}$. The vector field $a^\#$ is invariant under $(\beta_t)$ if for all $(t,y)\in\R\times P$ for which $\beta_t$ is defined, one has
 $$ (\beta_t)_{*y}(a_y^\#)=a_{\beta_t(y)}^\#.
 $$
Using Proposition \ref{InvariantLIFT} the vector field $a^\sharp$ is invariant under $\beta_t$ if and only if $[\tilde{X},a^\sharp]=0$. Set $k_t=\exp(ta)\in K$ and use $R_{k_{t*}}\tilde{X}=\tilde{X}$ to obtain
$$[a^\#,\tilde X]=\lim_{t\to0}\frac{1}{t}(\tilde X-R_{k_{t*}}\tilde{X})=0.
$$%
\end{proof}
%
The canonical bundle isomorphism $V_P\simeq P\times \kg$ can be used to identify the Lie subalgebra of vertical fields ${\cal X}^v(P):=\Gamma(V_P)$ with the space of smooth map ${\cal C}^\infty(P, \kg)$. This identification is given by $\nu\mapsto \xi^\nu$, where $\xi^\nu$ is used to denote the vertical field associated with $\nu\in {\cal C}^\infty(P, \kg)$ and  $\xi^\nu:P\to V_P$ at a point $y\in P$ is defined by
$$\xi^\nu_y:=\at{\frac{d}{dt}}{t=0}(y \exp t\nu(y)).
$$
A smooth map $\nu:P\to \kg$ is called $K$-equivariant if, for every $(y,k)\in P\times K$ one has $\nu(yk)=\ad_{k^{-1}}\nu(y)$, where $\ad:K\to \End(\kg)$ denotes the adjoint representation. The space of all $K$-equivariant maps is denoted by ${\cal C}^K(P,\kg)$. Now, suppose $A^0(\ad (P))$ denotes the space of smooth sections of the adjoint bundle $\ad(P):=P\times_\ad \kg\to M$. There exists a natural identification between two sapces ${\cal C}^K(P,\kg)$ and $A^0(\ad (P))$, given at a point $y\in P_x$ ($x\in M$) by
$${\cal C}^K(P,\kg)\to A^0(\ad (P)), \ \nu\mapsto (x\mapsto [y,\nu(y)])
$$
By the above discussion it is clear that:
  \begin{co}\label{xinujadid}
  For each $K$-invariant vertical vector field $Y\in{\cal X}^v(P)$, there exists a section $\nu\in A^0(\ad (P))$ such that $Y=\xi^\nu$.
\end{co}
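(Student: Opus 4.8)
The plan is to lean on the two identifications set up just above the statement. Every vertical vector field on $P$ is of the form $\xi^\mu$ for a unique smooth $\mu\colon P\to\kg$, so it suffices to show that $Y=\xi^\mu$ is $K$-invariant if and only if $\mu$ is $K$-equivariant, i.e. $\mu\in{\cal C}^K(P,\kg)$; the corollary then follows immediately by taking $\nu\in A^0(\ad(P))$ to be the section corresponding to $\mu$ under the natural isomorphism ${\cal C}^K(P,\kg)\cong A^0(\ad(P))$, since by construction $\xi^\nu=\xi^\mu=Y$.

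The heart of the matter is a direct computation comparing $(R_k)_{*}\xi^\mu_y$ with $\xi^\mu_{yk}$. Unwinding the definition of $\xi^\mu$ and using the conjugation identity $\exp(ta)\,k=k\,\exp(t\,\ad_{k^{-1}}a)$ in $K$, I get, for every $y\in P$ and $k\in K$,
\[
(R_k)_{*y}\,\xi^\mu_y=\at{\frac{d}{dt}}{t=0}\bigl(y\exp(t\mu(y))\,k\bigr)=\at{\frac{d}{dt}}{t=0}\bigl(yk\,\exp(t\,\ad_{k^{-1}}\mu(y))\bigr),
\]
which is the vertical vector at $yk$ determined by the Lie algebra element $\ad_{k^{-1}}\mu(y)$, whereas $\xi^\mu_{yk}$ is by definition the vertical vector at $yk$ determined by $\mu(yk)$. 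Because the $K$-action on $P$ is free, the assignment $a\mapsto\at{\frac{d}{dt}}{t=0}(yk\exp(ta))$ is an injective linear map $\kg\to V_{P,yk}$; hence $(R_k)_{*}\xi^\mu_y=\xi^\mu_{yk}$ for all $y,k$ holds precisely when $\mu(yk)=\ad_{k^{-1}}\mu(y)$ for all $y,k$, which is the defining property of ${\cal C}^K(P,\kg)$.

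Once $\mu$ is known to be $K$-equivariant, the identification ${\cal C}^K(P,\kg)\to A^0(\ad(P))$ recalled above produces the desired section $\nu$, namely $\nu(x)=[y,\mu(y)]$ for any $y\in P_x$, and then $Y=\xi^\nu$. No analytic estimates enter; the only place needing care is the bookkeeping in the displayed chain of equalities --- keeping track of which fibre each vertical vector lives in, and invoking freeness of the $K$-action to upgrade equality of vertical vectors to equality of the corresponding Lie algebra elements. Everything else is a routine unravelling of the definitions of $\xi^\mu$ and of the isomorphism ${\cal C}^K(P,\kg)\cong A^0(\ad(P))$.
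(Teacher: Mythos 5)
Your argument is correct and is essentially the paper's own reasoning made explicit: the paper presents the corollary as an immediate consequence of the preceding identifications ${\cal X}^v(P)\simeq{\cal C}^\infty(P,\kg)$ and ${\cal C}^K(P,\kg)\simeq A^0(\ad(P))$, and your computation simply verifies the implicit step that $K$-invariance of $\xi^\mu$ is equivalent to $K$-equivariance of $\mu$. No discrepancy with the paper's approach, and the freeness argument you use to pass from equality of vertical vectors to equality of Lie algebra elements is sound.
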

 \begin{lm}\label{XTILDEXINU}
Let $p:P\to M$ be a principal $K$-bundle endowed with connection $A$. If $\tilde X$ denotes the horizontal lift of a vector field $X$ on $M$, then for any  section $\nu\in A^0(\ad (P))$ we have $[\tilde{X},\xi^{\nu}]=\xi^{\nabla^A_X\nu}.$
\end{lm}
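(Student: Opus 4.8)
The plan is to reduce the identity to the vanishing bracket of Proposition \ref{alphaAtiez} by expanding $\nu$ in a fixed basis of $\kg$, and then to recognize the resulting vertical field as the one attached to $\nabla^A_X\nu$. First I would fix once and for all a basis $e_1,\dots,e_m$ of $\kg$ and write the $K$-equivariant map associated with $\nu$ — still denoted $\nu:P\to\kg$ — as $\nu=\sum_i\nu^i e_i$ with globally defined $\nu^i\in{\cal C}^\infty(P,\R)$. Since for each $y\in P$ the map $a\mapsto a^\#_y=\at{\tfrac{d}{dt}}{t=0}(y\exp ta)$ is $\R$-linear in $a\in\kg$, the very definition of $\xi^\nu$ gives $\xi^\nu=\sum_i\nu^i e_i^\#$ as a vertical vector field on $P$, where $e_i^\#$ is the fundamental field of $e_i$. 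More generally $\xi^\mu=\sum_i\mu^i e_i^\#$ for any $\mu=\sum_i\mu^i e_i\in{\cal C}^\infty(P,\kg)$.

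Next I would compute the bracket by the Leibniz rule, obtaining $[\tilde X,\xi^\nu]=\sum_i\bigl(\tilde X(\nu^i)\,e_i^\#+\nu^i[\tilde X,e_i^\#]\bigr)$. By Proposition \ref{alphaAtiez} each $[\tilde X,e_i^\#]$ vanishes, since $e_i\in\kg$ is constant, hence $[\tilde X,\xi^\nu]=\sum_i\tilde X(\nu^i)\,e_i^\#$. Setting $\mu:=\sum_i\tilde X(\nu^i)\,e_i=d\nu(\tilde X)$, i.e.\ the derivative of the $\kg$-valued function $\nu$ along $\tilde X$ (manifestly independent of the chosen basis), the previous paragraph yields $[\tilde X,\xi^\nu]=\xi^{\mu}=\xi^{d\nu(\tilde X)}$.

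It then remains to identify $d\nu(\tilde X)$ with $\nabla^A_X\nu$. First I would verify that $d\nu(\tilde X)$ is again $K$-equivariant: using $R_{k*}\tilde X=\tilde X$, the equivariance relation $\nu\circ R_k=\ad_{k^{-1}}\circ\nu$, and that $\ad_{k^{-1}}$ is a fixed linear endomorphism of $\kg$, one gets $\bigl(d\nu(\tilde X)\bigr)(yk)=\ad_{k^{-1}}\bigl(d\nu(\tilde X)\bigr)(y)$, so $d\nu(\tilde X)\in{\cal C}^K(P,\kg)$ and defines a section of $\ad(P)$. Finally, since the covariant derivative induced by $A$ on the associated bundle $\ad(P)=P\times_{\ad}\kg$ is characterized, under the identification $A^0(\ad(P))\cong{\cal C}^K(P,\kg)$, by the horizontal-derivative formula $\widetilde{\nabla^A_X\nu}=d\nu(\tilde X)$, I conclude $\xi^{d\nu(\tilde X)}=\xi^{\nabla^A_X\nu}$, which is the assertion. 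The only point needing genuine care is this last identification — matching the definition of $\nabla^A$ on the adjoint bundle with the horizontal-derivative formula — together with the equivariance check; the rest is just the Leibniz rule and Proposition \ref{alphaAtiez}.
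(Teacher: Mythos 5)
Your argument is correct, and it reaches the same two pillars as the paper's proof --- the fact from Proposition \ref{alphaAtiez} that fundamental fields of constant elements of $\kg$ commute with $\tilde X$, and the identification of the horizontal derivative $d\nu(\tilde X)$ of the equivariant map $\nu$ with $\nabla^A_X\nu$ --- but by a different mechanism. The paper computes $[\tilde X,\xi^\nu]$ directly from the limit definition of the Lie bracket along the flow $(\beta_t)$ of $\tilde X$: it follows the horizontal lift $y_t$ of an integral curve of $X$, sets $a_t=\nu(y_t)$, uses $(\beta_t)_*(a_{-t})^\#_{y_{-t}}=(a_{-t})^\#_{y_0}$ (the flow-invariance form of Proposition \ref{alphaAtiez}), and the derivative $\dot a_0=d\nu(\tilde X)$ appears at the end of the limit computation. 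You instead expand $\nu=\sum_i\nu^i e_i$ in a fixed basis of $\kg$, use linearity of $a\mapsto a^\#_y$ to write $\xi^\nu=\sum_i\nu^i e_i^\#$, and apply the Leibniz rule together with $[\tilde X,e_i^\#]=0$; this is purely algebraic, avoids the flow/limit computation entirely, and is arguably cleaner, at the modest cost of having to note separately that $\sum_i\tilde X(\nu^i)e_i=d\nu(\tilde X)$ is basis-independent and $K$-equivariant. Your equivariance check (via $R_{k*}\tilde X=\tilde X$ and $\nu\circ R_k=\ad_{k^{-1}}\circ\nu$) is correct and is actually a point the paper passes over silently. Both proofs ultimately appeal, at the same level of rigor, to the standard characterization of the induced connection on an associated bundle by horizontal derivatives of equivariant maps, so nothing is missing from your argument.
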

\begin{proof}
Let $t\mapsto x_t=x(t)$ be an integral curve of $X$ defined for $t\in(-\varepsilon,\varepsilon)$ for some $\varepsilon> 0$  with the initial condition $x(0)=x_0$. The horizontal lift $t\mapsto y_t$ of the path $t\mapsto x_t$ through a point $y_0\in P_{x_0}$ is the integral curve of $\tilde X$ with initial condition $y(0)=y_0$. A section $\nu\in A^0(\ad (P))$ can be regarded as a $K$-equivariant map $\nu:P\to\kg$. Put $a_t:=\nu(y_t)\in\kg$, for all $t\in(-\varepsilon,\varepsilon)$. The vertical vector field $\xi^\nu_{y_t}$ is given by
$$\xi^\nu_{y_t}=(a_t)^\#_{y_t}=\at{\frac{d}{ds}}{s=0}(y_t\exp(sa_t)).
$$ 
Let $(\beta_t)_{t\in(-\varepsilon,\varepsilon)}$ be the $1$-parameter group of local diffeomorphism generated by $\tilde{X}$. We have 
	\begin{equation*}
			[\tilde{X},\xi^\nu]_{y_0}=\lim_{t\rightarrow0}\frac{1}{t}\{\xi^\nu_{y_0}-(\beta_t)_*(\xi^\nu_{y_{-t}})\}=\lim_{t\rightarrow0}\frac{1}{t}\{(a_0)^\#_{y_0}-(\beta_t)_*(a_{-t})^\#_{y_{-t}}\}
	\end{equation*}
	Since $(\beta_t)_*(a_{-t})^\#_{y_{-t}}=(a_{-t})^\#_{y_0}$, we have
	\begin{equation*}
		\begin{split}
			[\tilde{X},\xi^\nu]_{y_0}&=\lim_{t\rightarrow0}\frac{1}{t}\{(a_0)^\#_{y_0}-(a_{-t})^\#_{y_0}\}=\{\lim_{t\rightarrow0}\frac{1}{t}(a_{0}-a_{-t})\}^\#_{y_0}
			=\{(\dot a_t)_{t=0}\}^\#_{y_0}\\
			&=\{\frac{ d}{dt}\nu (y_t)_{t=0}\}^\#_{y_0}=\{d\nu(\tilde{X}_{y_0})\}^\#_{y_0}=\{\nabla^A_{X_{y_0}}\nu\}^\#_{y_0}=\xi^{\nabla^A_{X_{y_0}}\nu}.
		\end{split}
	\end{equation*}

\end{proof}
\begin{lm}\label{XTILDEXINU}
	Let $p:P\to M$ be a principal $K$-bundle. For any $a\in\kg$ and any section $\nu\in A^0(\ad( P))$ we have $[a^\#,\xi^{\nu}]=0$. In particular, the vector field $\xi^\nu$ is $K$-equivariant.
\end{lm}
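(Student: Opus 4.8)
The plan is to prove directly the sharp form of the ``in particular'' clause, namely $(R_k)_{*y}(\xi^\nu_y)=\xi^\nu_{yk}$ for all $k\in K$ and $y\in P$, and then to read off $[a^\#,\xi^\nu]=0$ as an immediate consequence. First I would reinterpret $\xi^\nu$ intrinsically through the orbit map: writing $\sigma_y\colon K\to P$ for $k\mapsto yk$ and identifying $T_eK$ with $\kg$, one has $a^\#_y=(\sigma_y)_{*e}(a)$ and hence $\xi^\nu_y=(\sigma_y)_{*e}\bigl(\nu(y)\bigr)$, where $\nu$ is regarded as the $K$-equivariant map $P\to\kg$ attached to the section, so that $\nu(yk)=\ad_{k^{-1}}\nu(y)$. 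I would also recall, exactly as in the proof of Proposition \ref{alphaAtiez}, that the $1$-parameter group generated by $a^\#$ is $\beta_t=R_{\exp(ta)}$.

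The heart of the argument is the change-of-variables identity $R_k\circ\sigma_y=\sigma_{yk}\circ C_{k^{-1}}$, where $C_{k^{-1}}\colon K\to K$ is the conjugation $h\mapsto k^{-1}hk$; both sides send $h$ to $yhk$. Differentiating at $h=e$, using $(C_{k^{-1}})_{*e}=\ad_{k^{-1}}$ and then the $\ad$-equivariance of $\nu$, gives
\begin{equation*}
(R_k)_{*y}\bigl(\xi^\nu_y\bigr)=(R_k)_{*y}(\sigma_y)_{*e}\bigl(\nu(y)\bigr)=(\sigma_{yk})_{*e}\bigl(\ad_{k^{-1}}\nu(y)\bigr)=(\sigma_{yk})_{*e}\bigl(\nu(yk)\bigr)=\xi^\nu_{yk}.
\end{equation*}
This is precisely the assertion that $\xi^\nu$ is $K$-equivariant. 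Specializing to $k=\exp(ta)$ we get $(\beta_t)_*\xi^\nu=\xi^\nu$ for every $t$, whence $[a^\#,\xi^\nu]=L_{a^\#}\xi^\nu=\frac{d}{dt}\big|_{t=0}(\beta_{-t})_*\xi^\nu=0$; equivalently, the defining limit $[a^\#,\xi^\nu]_{y_0}=\lim_{t\to0}\tfrac1t\bigl\{\xi^\nu_{y_0}-(\beta_t)_*\xi^\nu_{y_0\exp(-ta)}\bigr\}$ vanishes termwise.

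I do not expect a genuine obstacle here: once $\xi^\nu$ is written as $(\sigma_y)_{*e}(\nu(y))$ the statement is a formal consequence of the $\ad$-equivariance of $\nu$. The only point requiring attention is the bookkeeping of left versus right translations — in particular the appearance of the conjugation $C_{k^{-1}}$ and the need to match its differential $\ad_{k^{-1}}$ with the sign convention $\nu(yk)=\ad_{k^{-1}}\nu(y)$ used in the identification $\mathcal C^K(P,\kg)\cong A^0(\ad(P))$. (Note that deriving $K$-equivariance from $[a^\#,\xi^\nu]=0$ directly would also require $K$ to be connected, whereas the approach above needs no such hypothesis.) If one wishes to avoid the orbit map, the same conclusion follows by a direct computation with $\beta_t=R_{\exp(ta)}$ in the flow expression for $[a^\#,\xi^\nu]$, using the elementary identity $\exp(-ta)\exp\bigl(s\,\ad_{\exp(ta)}\nu(y_0)\bigr)\exp(ta)=\exp\bigl(s\,\nu(y_0)\bigr)$ to see that $(\beta_t)_*\xi^\nu_{y_0\exp(-ta)}=\xi^\nu_{y_0}$.
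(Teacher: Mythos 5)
Your argument is correct, and it follows a genuinely different route from the paper. The paper lets $f_s$ denote the flow of $\xi^\nu$, observes that $f_s$ is a bundle automorphism (a gauge transformation) and therefore commutes with every right translation $\varphi_t=R_{\exp(ta)}$, and then invokes the Kobayashi--Nomizu criterion that commuting flows force $[a^\#,\xi^\nu]=0$. You instead work infinitesimally and pointwise: writing $\xi^\nu_y=(\sigma_y)_{*e}(\nu(y))$ and differentiating the identity $R_k\circ\sigma_y=\sigma_{yk}\circ C_{k^{-1}}$, you obtain $(R_k)_{*y}(\xi^\nu_y)=\xi^\nu_{yk}$ directly from $\nu(yk)=\ad_{k^{-1}}\nu(y)$, and then read off the vanishing bracket as $L_{a^\#}\xi^\nu=0$, never touching the flow of $\xi^\nu$. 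What your route buys: (i) it establishes the full $K$-equivariance of $\xi^\nu$ for every $k\in K$ as the primary statement, rather than as an ``in particular'' of the bracket identity, which on its own only yields invariance under the identity component (a point you rightly flag); and (ii) it avoids the one step the paper leaves implicit, namely the justification that $f_s(y)=y\exp(s\nu(y))$ is $K$-equivariant --- an assertion that is essentially equivalent to the conclusion and deserves the short computation you effectively perform. The paper's proof is shorter once the cited proposition of Kobayashi--Nomizu and the gauge-transformation property of $f_s$ are granted, and it has the merit of exhibiting the flow of $\xi^\nu$ explicitly, which is used in spirit elsewhere (e.g.\ in the remark identifying $\mathrm{Lie}({\cal G}^A(P))$); your version is more self-contained and requires no connectedness or completeness considerations. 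Your closing alternative computation, via $\exp(-ta)\exp\bigl(s\,\ad_{\exp(ta)}\nu(y_0)\bigr)\exp(ta)=\exp\bigl(s\,\nu(y_0)\bigr)$, is also correct and matches the limit convention used in the paper's proof of the preceding lemma.
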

\begin{proof} Let $(\varphi_t)_{t\in\R}$  be the $1$-parameter group of  diffeomorphisms generated by the vector field $a^\#$. The maps $\varphi_t:P\to P$ are defiend by $y\mapsto y\,\exp(ta)$. Let $(f_s)_{s\in\R}$ be the $1$-parameter group of  diffeomorphisms generated by $\xi^\nu$. Using \cite[Proposition 1.11]{KN} the Lie bracket $[a^\#,\xi^\nu]=0$ if and only if  $\varphi_t\circ f_s=f_s\circ \varphi_t$ for all $s,t\in \R$. Since $f_s$ is a bundle isomorphism, it commutes with the right translations and for all $y\in P$
	\begin{equation*}
		f_s(\varphi_t(y))=f_s(y\exp(ta))=f_s(y)\exp(ta)=\varphi_t(f_s(y))\,.
	\end{equation*}
	\end{proof}
 Suppose $p:P \to M$ is a principal $K$-bundle over $M$ with connection $A$ and use $\nabla^A:A^0(\ad(P))\to A^1(\ad(P))$ to denote the induced connection on $\ad(P)$. The covariant exterior derivative associated with connection $A$ is denoted by $d^{\nabla^A}:A^r(\ad (P))\to A^{r+1}(\ad (P))$ and the curvature $2$-form $F_A \in A^2(\ad (P)) $ is defined by
  $$F_A=d^{\nabla^A}\circ \nabla^A.
  $$
Let $Y\in{\cal X}(P)$ be a vector field on $P$ and $(\beta_t)$ be the $1$-parameter group of local diffeomorphism generated by it. We say that the connection $A$ is invariant under the $1$-parameter group of local diffeomorphism generated by $Y$, if
$$(\beta_t)_*A_y=A_{\beta_t(y)}.
$$
for all $(y,t)\in P\times \R$ for which $\beta_t(y)$ is defined. 
  \begin{dt}
	Let  $p:P\to M$ be a principal $K$-bundle over $M$ endowed with a connection $A$. Let $X$ be a vector field on $M$ and $\nu\in A^0(\ad(P))$ be a section of the adjoint bundle. We say that $X$ and $\nu$ satisfy the generalized moment map equation if $\iota_XF_A=-\nabla^A\nu$. 
	\end{dt}
 It is known \cite[p. 257]{GH} that for two horizontal lifts $\tilde X_1, \tilde X_2$ of vector fields $X_1, X_2$ on $M$ we have the following formula 
\begin{equation}\label{CURVATUREFURMOLA}
[\tilde X_1,\tilde X_2]=\widetilde{[X_1,X_2]}-\xi^{F_A(X_1,X_2)}.
\end{equation}
Therefore, the horizontal projection of $[\tilde X_1,\tilde X_2]$ is $\widetilde{[X_1, X_2]}$ and the vertical projection of it is $[\tilde X_1,\tilde X_2]^v= -\xi^{F_A(X_1,X_2)}$.
\begin{thry}\label{momentap}
Let $p:P \to M$ be a principal $K$-bundle on $M$ with connection $A$ and the curvature $2$-form $F_A\in A^2(\ad(P))$. Let $\tilde X$ be the horizontal lift of a vector field $X$ on $M$ and $\nu\in A^0(\ad(P))$. The connection $A$ is invariant under the $1$-parameter group of local diffeomorphism generated by $\tilde X+\xi^\nu$ if and only if $X$ and $\nu$ satisfy the generalized moment map equation.
\end{thry}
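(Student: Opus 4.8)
The plan is to reduce the statement to the bracket criterion of Proposition \ref{InvariantLIFT} and then to read off the horizontal and vertical components of a single Lie bracket, using formula (\ref{CURVATUREFURMOLA}) together with Lemma \ref{XTILDEXINU}. Set $Y:=\tilde X+\xi^\nu\in{\cal X}(P)$. By the definition given just before the theorem, the connection $A$ is invariant under the flow of $Y$ precisely when the horizontal distribution $A\subset T_P$ is, so by the equivalence $(1)\Leftrightarrow(3)$ of Proposition \ref{InvariantLIFT} this happens if and only if $[Y,\tilde Z]$ is horizontal for every horizontal lift $\tilde Z$ of a vector field $Z$ on $M$. Thus the whole theorem is equivalent to a statement about one bracket.

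Next I would compute that bracket. By bilinearity, $[Y,\tilde Z]=[\tilde X,\tilde Z]+[\xi^\nu,\tilde Z]$. Formula (\ref{CURVATUREFURMOLA}) gives $[\tilde X,\tilde Z]=\widetilde{[X,Z]}-\xi^{F_A(X,Z)}$, whose horizontal part is $\widetilde{[X,Z]}$ and whose vertical part is $-\xi^{F_A(X,Z)}$; and Lemma \ref{XTILDEXINU} (the identity $[\tilde Z,\xi^\nu]=\xi^{\nabla^A_Z\nu}$) gives $[\xi^\nu,\tilde Z]=-\xi^{\nabla^A_Z\nu}$, which is vertical. Hence
$$[Y,\tilde Z]=\widetilde{[X,Z]}-\xi^{F_A(X,Z)+\nabla^A_Z\nu},$$
a decomposition into a horizontal plus a vertical field. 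Since this decomposition is unique, $[Y,\tilde Z]$ is horizontal if and only if $\xi^{F_A(X,Z)+\nabla^A_Z\nu}=0$, and since $\mu\mapsto\xi^\mu$ is injective on sections this means $F_A(X,Z)+\nabla^A_Z\nu=0$.

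Finally I would let $Z$ vary over ${\cal X}(M)$. Both $Z\mapsto F_A(X,Z)=(\iota_XF_A)(Z)$ and $Z\mapsto\nabla^A_Z\nu=(\nabla^A\nu)(Z)$ are $A^0(M)$-linear in $Z$, so the condition "$F_A(X,Z)+\nabla^A_Z\nu=0$ for all $Z$" is equivalent to the pointwise identity of $\ad(P)$-valued $1$-forms $\iota_XF_A=-\nabla^A\nu$, which is exactly the generalized moment map equation; both implications of the theorem emerge simultaneously from this chain of equivalences.

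There is no genuine obstacle in this argument — all the substantive work is already contained in Proposition \ref{InvariantLIFT}, Lemma \ref{XTILDEXINU} and the curvature formula (\ref{CURVATUREFURMOLA}). The only points requiring attention are keeping the sign conventions in (\ref{CURVATUREFURMOLA}) and in the definition of the contraction $\iota_X$ consistent with one another, and the (routine) passage from the "for all $Z$" formulation to the tensorial equation, which relies only on the $C^\infty(M)$-linearity just noted.
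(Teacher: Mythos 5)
Your proposal is correct and follows essentially the same route as the paper: reduce to the bracket criterion of Proposition \ref{InvariantLIFT}, expand $[\tilde X+\xi^\nu,\tilde Z]$ via formula (\ref{CURVATUREFURMOLA}) and Lemma \ref{XTILDEXINU}, and identify the vanishing of the vertical part with $\iota_XF_A=-\nabla^A\nu$. Your added remarks on the uniqueness of the horizontal--vertical splitting, the injectivity of $\mu\mapsto\xi^\mu$, and the $C^\infty(M)$-linearity in $Z$ only make explicit steps the paper leaves implicit.
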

\begin{proof}
By Proposition \ref{InvariantLIFT} the connection $A$ is invariant under the $1$-parameter group of local diffeomorphism generated by $\tilde X+\xi^\nu$ if and only if for every horizontal lift $\tilde Z$ of vector field $Z$ on $M$, the Lie bracket $[\tilde X+\xi^\nu,\tilde Z]$ is also horizontal. Using equation (\ref{CURVATUREFURMOLA}) we obtain
\begin{align*}
[\tilde X+\xi^\nu,\tilde Z]=[\tilde X,\tilde Z]+[\xi^\nu,\tilde Z]=\widetilde {[X,Z]}-\xi^{F_A(X,Z)}+[\xi^\nu,\tilde Z].
\end{align*}
The connection $A$ is invariant under the $1$-parameter group of local diffeomorphism generated by $\tilde X+\xi^\nu$ if and only if $[\tilde X+\xi^\nu,\tilde Z]\in\Gamma(A)$. But, this happen if and only if
\begin{equation}\label{xiOmegZ}
\xi^{F_A(X,Z)}=[\xi^\nu,\tilde Z].
\end{equation}
By Lemma \ref{XTILDEXINU}, $[\xi^\nu,\tilde Z]=-\xi^{\nabla^A\nu(Z)}$. Hence (\ref{xiOmegZ}) is equivalent to $\iota_XF_A=-\nabla^A\nu.$ 
\end{proof}
\begin{pr}\label{betaleaveinvariantV}
Let $\tilde{X}$ be the horizontal lift of a vector field $X$ on $M$ and let $\nu\in A^0(\ad(P))$. If $(\beta_t)$ denotes the $1$-parameter group of local diffeomorphism generated by $\tilde{X}+\xi^\nu$, then $(\beta_t)$ leaves invariant the vertical distribution.
\end{pr}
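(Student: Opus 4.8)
The plan is to deduce this from Lemma~\ref{Distributioninvariant} applied to the vertical distribution $D=V_P$. Write $Y:=\tilde X+\xi^\nu$. By that lemma it suffices to check that $[Y,Z]\in\Gamma(V_P)$ for every vertical vector field $Z\in{\cal X}^v(P)=\Gamma(V_P)$; equivalently, by the Remark after Definition~\ref{HorKill}, that $Y$ is fibre preserving. Geometrically this is just the statement that the flow $(\beta_t)$ of $Y$ covers the flow of $X$ on $M$, hence sends fibres to fibres.

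I would argue via $p$-relatedness. Since $\xi^\nu$ is vertical, $p_*\xi^\nu=0$, and $p_*\tilde X=X\circ p$ by the definition of the horizontal lift; hence $p_*Y=X\circ p$, so $Y$ is $p$-related to $X$. Any vertical field $Z$ has $p_*Z=0$, i.e. $Z$ is $p$-related to the zero vector field on $M$. By the naturality of the Lie bracket under $p$-related vector fields, $[Y,Z]$ is $p$-related to $[X,0]=0$; thus $p_*[Y,Z]=0$, that is $[Y,Z]\in\Gamma(V_P)$. Lemma~\ref{Distributioninvariant} then yields that $V_P$ is invariant under $(\beta_t)$. (Equivalently, $p$-relatedness gives $p\circ\beta_t=\gamma_t\circ p$ for the flow $\gamma_t$ of $X$, so $\beta_t(P_x)\subseteq P_{\gamma_t(x)}$, which is exactly invariance of the vertical distribution.)

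If one prefers to stay inside the computational framework already set up, the same bracket computation can be done by hand. Over a trivializing open set $U\subset M$ pick a basis $a_1,\dots,a_m$ of $\kg$; then on $P_U$ every vertical field is $Z=\sum_i f_i\,a_i^\#$ with $f_i\in{\cal C}^\infty(P_U)$. Using $[\tilde X,a_i^\#]=0$ (Proposition~\ref{alphaAtiez}), $[a_i^\#,\xi^\nu]=0$ (Lemma~\ref{XTILDEXINU}), and the Leibniz rule $[W,f_i a_i^\#]=W(f_i)\,a_i^\#+f_i[W,a_i^\#]$ with $W=\tilde X$ and $W=\xi^\nu$, one obtains $[Y,Z]=\sum_i Y(f_i)\,a_i^\#$ on $P_U$, which is manifestly vertical.

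There is no real obstacle here. The only mild point of care in the second, computational, approach is that the description $Z=\sum_i f_i a_i^\#$ and the cited brackets are a priori local, so one should note that the resulting identity is trivially independent of the chosen trivialization and basis (being the restriction of a globally defined bracket); the $p$-relatedness argument avoids this bookkeeping altogether, so that is the one I would actually write up.
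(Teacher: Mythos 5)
Your proof is correct, and your main argument is genuinely different from the paper's. The paper also reduces, via Lemma~\ref{Distributioninvariant}, to showing $[\tilde X+\xi^\nu,Z]\in\Gamma(V_P)$ for vertical $Z$, but it then splits the bracket and treats the two pieces separately: $[\xi^\nu,Z]$ is vertical because the vertical distribution is integrable, and $[\tilde X,Z]$ is handled by expanding $Z=\sum_i f_i a_i^\#$ in fundamental fields and using $[\tilde X,a_i^\#]=0$ together with the Leibniz rule --- essentially your second, ``computational'' sketch (your variant replaces the integrability argument by $[a_i^\#,\xi^\nu]=0$, which also works). Your primary argument via $p$-relatedness is cleaner and more general: since $p_*\tilde X=X\circ p$ and $p_*\xi^\nu=0$, the field $Y=\tilde X+\xi^\nu$ is $p$-related to $X$ while any vertical $Z$ is $p$-related to $0$, so naturality of the Lie bracket gives $p_*[Y,Z]=0$ at once, with no local frame or integrability input; it in fact shows that \emph{any} vector field on $P$ that is $p$-related to some vector field on $M$ preserves $V_P$, and the flow identity $p\circ\beta_t=\gamma_t\circ p$ you mention even gives the fibre-to-fibre statement directly, bypassing Lemma~\ref{Distributioninvariant}. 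What the paper's route buys is that it stays entirely inside the bracket identities already established (Proposition~\ref{alphaAtiez} and the lemmas on $\xi^\nu$), which are reused later anyway; what yours buys is brevity and independence from the choice of trivialization.
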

\begin{proof}
Using Proposition \ref{Distributioninvariant} the vertical bundle $V_P$ is invariant by $(\beta_t)$ iff for any section $Z\in\Gamma(V_P)$ one has $[\tilde{X}+\xi^\nu,Z]\in\Gamma(V_P)$. But
\begin{equation*}
[\tilde X+\xi^\nu,Z]=[\tilde X,Z]+[\xi^\nu,Z].
\end{equation*}
The integrability of vertical distribution imply $[\xi^\nu,Z]\in\Gamma(V_P)$. Moreover, $[\tilde X,Z]$ is also a vertical section. To see this note that the space of vertical sections is a free ${\cal C}^\infty(P)$-module, generated by the fundamental vector fields. Thus, for any vertical section $Z\in\Gamma(V_P)$ there exists a family$\{a_1^\#,...,a^\#_r\}$  of fundamental vector fields and a family $\{f_1,...,f_r\}$ of smooth functions on $P$ such that $Z=f_1a_1^\#+...+f_ra_r^\#$. Therefore
\begin{equation*}
\begin{split}
[\tilde X,Z]=[\tilde X,\sum_{i=1}^rf_ia_i^\#]&=\sum_{i=1}^r(  df_i (\tilde X) a_i^\#+f_i[\tilde X,a_i^\#])=\sum_{i=1}^r  df_i (\tilde X) a_i^\#\in\Gamma(V_P).
\end{split}
\end{equation*}
\end{proof}
\section{Fiber preserving infinitesimal isometries of connection metric }
Let $p:P\to M$ be a principal $K$-bundle endowed with connection $A$. 
 Let $g$ be a Riemannian metric on $M$ and $\llangle\cdot,\cdot\rrangle$ be a $\ad$-invariant metric on the Lie algebra $\kg$ of $K$. We summarize these information by saying that $(g,P\textmap{p} M,A)$ is a $K$-triple over $M$. For $y\in P$ the fibre over $x\in M$ is denoted by $P_x:=p^{-1} (x)$. For all $y\in P_x$ the map $\tau_y:K\to P_x$ defined by $\tau_y(k):=yk$ is a diffeomorphism and can be used to define a metric on the fiber $P_x$. To show that this metric is well defined, let $y'\in P_x$ be another point in the fibre, then there exists $k_0\in K$ such that $y'=yk_0$. For all $k\in K$
$$\tau_{yk_0}(k)=(yk_0)k=y(k_0k)=\tau_y (l_{k_0}k)=(\tau_y l_{k_0})(k).
$$
where $l_{k_0}$ denotes the left translation by $k_0$. Since the metric on $K$ is $\ad$-invariant the induced metric on $(V_P)_y\simeq T_yP_x$ is well defined. Use the linear isomorphism $A_y\simeq T_xM$ to lift the metric $g_x$ on the base to a metric on the horizontal space $A_y$. These two metrics define a metric $g_A$ on $T_P\simeq A\oplus V_P$.

If $\omega_A\in A^1(P, \mathfrak{k})$ denotes the connection $1$-form associated to connection $A$, then $\omega_A$ and $\ad$-invariant metric $\llangle\cdot,\cdot\rrangle$ on $\kg$ define a symmetric tensor $\llangle\omega_A,\omega_A\rrangle$ of type $(0,2)$ acts on $(v,w)\in T_yP\times T_yP$ by
$$\llangle\omega_A,\omega_A\rrangle(v,w):=\llangle\omega_A(v),\omega_A(w)\rrangle,
$$
 Adding $p^*g$ to $\llangle\omega_A,\omega_A\rrangle$ we obtain a Riemannian metric $g_A:=p^*g +\llangle\omega_A,\omega_A\rrangle$ on total space $P$ such that for all vectors $v$, $w \in T_yP$ 
$$g_A(v,w)=g\left(p_{*}(v),p_{*}(w)\right)+\llangle\omega_A(v),\omega_A(w)\rrangle.
$$
It is clear that $g_A$ is the connection metric on $P$.
\begin{lm}\label{RaisomP}
Suppose $(g,P\textmap{p} M,A)$ is a $K$-triple over $M$, then
\begin{enumerate}[(a)]
\item The right multiplication $R_k:P\to P$ is an isometry of $P$ with respect to metric $g_A$. 
\item Let $\Phi\in \Diff^K_A(P)$ and denote the induced map on the base by $\varphi \in \Diff(M)$. Then,  $\Phi\in I(P)$ if and only if $\varphi\in I(M)$
\item $I^A_K(P)=\Diff^A_K(P)\cap I(P).$
\end{enumerate}
\end{lm}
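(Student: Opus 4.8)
The plan is to prove the three statements in order, with (a) feeding into (b) and (a)–(b) combining to give (c).

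\textbf{Part (a).} I would verify directly that $R_k^*g_A = g_A$ using the formula $g_A = p^*g + \llangle\omega_A,\omega_A\rrangle$. Since $R_k$ is a bundle map covering the identity on $M$, we have $p\circ R_k = p$, hence $R_k^*(p^*g) = p^*g$. For the vertical part, I would use the standard equivariance of the connection form, $R_k^*\omega_A = \ad_{k^{-1}}\circ\omega_A$, so that for $v,w\in T_yP$, $\llangle(R_k^*\omega_A)(v),(R_k^*\omega_A)(w)\rrangle = \llangle\ad_{k^{-1}}\omega_A(v),\ad_{k^{-1}}\omega_A(w)\rrangle = \llangle\omega_A(v),\omega_A(w)\rrangle$ because $\llangle\cdot,\cdot\rrangle$ is $\ad$-invariant. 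Adding the two contributions gives $R_k^*g_A = g_A$.

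\textbf{Part (b).} One implication is immediate: if $\Phi\in I(P)$ then $\Phi$ is an isometry of a Riemannian submersion carrying fibers to fibers, hence it descends to an isometry $\varphi$ of the base (the submersion $p$ with its metric $g$ determines $g$ from $g_A$ on horizontal vectors, and $\Phi$ maps horizontal spaces to horizontal spaces because $\Phi\in\Diff^K_A(P)$ preserves $A$). Concretely, for $X\in T_xM$ with horizontal lift $\tilde X$, $g(\varphi_*X,\varphi_*X) = g_A(\widetilde{\varphi_*X},\widetilde{\varphi_*X})$, and since $\Phi$ preserves $A$ and covers $\varphi$, one checks $\Phi_*\tilde X$ is the horizontal lift of $\varphi_*X$; using $\Phi^*g_A = g_A$ this equals $g_A(\tilde X,\tilde X) = g(X,X)$, so $\varphi\in I(M)$. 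For the converse, suppose $\varphi\in I(M)$ and $\Phi\in\Diff^K_A(P)$. I would check $\Phi^*g_A = g_A$ by decomposing tangent vectors into horizontal and vertical parts, noting that $\Phi$ preserves this decomposition (it preserves $A$ and, being a bundle map, preserves $V_P$). On horizontal vectors, $\Phi^*(p^*g) = p^*(\varphi^*g) = p^*g$ and the vertical term vanishes on horizontal inputs, so the horizontal block matches since $\varphi$ is an isometry. On vertical vectors, $\Phi$ being $K$-equivariant means it acts on each fiber $P_x\to P_{\varphi(x)}$ intertwining the $K$-actions, so via the trivializations $\tau_y,\tau_{\Phi(y)}$ it is a left translation composed with the identification; since the fiber metric comes from the $\ad$-invariant inner product on $\kg$ (equivalently $\llangle\omega_A,\omega_A\rrangle$ and $\Phi^*\omega_A = \omega_A$ because $\Phi$ preserves $A$ and is $K$-equivariant), the vertical block is preserved. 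Mixed terms vanish on both sides by $g_A$-orthogonality of $A\oplus V_P$, which $\Phi$ respects. Hence $\Phi\in I(P)$.

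\textbf{Part (c).} This is a formal consequence. The inclusion $I^A_K(P)\subseteq\Diff^A_K(P)\cap I(P)$ holds because $I^A_K(P)$ is by definition the stabilizer of $A$ inside $I_K(P)$, so its elements are $K$-equivariant bundle isomorphisms preserving $A$ (hence in $\Diff^A_K(P)$) and, being in $I_K(P)$ with base map in $I(M)$, are isometries by part (b) — or one simply notes $I^A_K(P)\subseteq I_K(P)$ and the isometry property is part of the setup. Conversely, if $\Phi\in\Diff^A_K(P)\cap I(P)$, then $\Phi$ is a $K$-equivariant bundle isomorphism preserving $A$, so it covers some $\varphi\in\Diff(M)$; by part (b), since $\Phi\in I(P)$, we get $\varphi\in I(M)$, hence $(\Phi,\varphi)\in I_K(P)$, and since $\Phi$ preserves $A$, $(\Phi,\varphi)$ lies in the stabilizer $I^A_K(P)$.

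The main obstacle is the careful bookkeeping in part (b), specifically confirming that $\Phi\in\Diff^K_A(P)$ sends the horizontal lift of $X$ to the horizontal lift of $\varphi_*X$ and intertwines the fiber trivializations correctly; once the invariance of the horizontal/vertical splitting and of $\omega_A$ under $\Phi$ is pinned down, the metric computation is routine.
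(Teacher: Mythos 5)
Your proposal is correct and follows essentially the same route as the paper: pull back $g_A=p^*g+\llangle\omega_A,\omega_A\rrangle$ using $p\circ R_k=p$, $R_k^*\omega_A=\ad_{k^{-1}}\omega_A$ and $\ad$-invariance for (a); use $\Phi^*\omega_A=\omega_A$ together with $p\circ\Phi=\varphi\circ p$ and evaluation on horizontal lifts for both directions of (b); and deduce (c) formally from (b) and the definition of $I^A_K(P)$. Your horizontal/vertical block-by-block check in (b) is just a slightly more verbose version of the paper's one-line pullback computation.
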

\begin{proof} (a) Since the inner product $\llangle\cdot, \cdot \rrangle$ is $\ad$-invariant and for all $k\in K$: $p\circ R_k=p$\,, $R^*_{k}\omega_A=\ad_{k^{-1}}\omega_A$ we have
\begin{equation*}\label{omegstar}
R_k^*g_A=R_k^*(p^*g)+R_k^*\llangle \omega_A,\omega_A\rrangle=p^*g+\llangle \ad_{k^{-1}}\omega_A,\ad_{k^{-1}}\omega_A\rrangle=g_A\,.
\end{equation*}
(b) Suppose $\Phi:P\to P$ is a $\varphi$-covering bundle isomorphism which leaves invariant the connection $A$, then $p\circ\Phi=\varphi\circ p$ and $\Phi^*\omega_A=\omega_A$. If $\varphi\in I(M)$, then 
\begin{equation*}
\begin{split}
\Phi^*g_A&=\Phi^*(p^*g)+\Phi^*(\llangle\omega_A,\omega_A\rrangle)=(p\circ \Phi)^*g+\llangle \Phi^*\omega_A,\Phi^* \omega_A\rrangle\\
&=p^*(\varphi^*g)+\llangle \omega_A, \omega_A\rrangle=p^*g+\llangle \omega_A, \omega_A\rrangle=g_A.\\
\end{split}
\end{equation*}
Conversely, if $\Phi\in I(P)$ then $\Phi^*g_A= g_A$, and hence for all vector fields $X, Y$ on $M$ and their horizontal lifts $\tilde X, \tilde Y\in {\cal X}^h(P)$ 
\begin{equation*}
\begin{split}
g(X,Y)&=g(p_*(\tilde X),p_*(\tilde Y))=(p^*g)(\tilde X,\tilde Y)=g_A(\tilde X,\tilde Y)=(\Phi^*g_A)(\tilde X, \tilde Y)\\
&=\Phi^*(p^*g)(\tilde X, \tilde Y)=(p^*\varphi^*)g(\tilde X,\tilde Y)=(\varphi^*g)( X, Y).
\end{split}
\end{equation*}
(c) It follows from (b) and the definition of $I^A_K(P)$.
\end{proof}

\begin{re} \label{aSharpIsKilling}
For any $a\in\kg$, the map $R_{\exp ta}:P\to P$ is the $1$-parameter group of diffeomorphism generated by $a^\#$. It is clear that $a^\#\in \mathfrak i_V(P)$. 
\end{re}
\begin{pr}\label{KillingnU}
Let $(g,P\textmap{p} M,A)$ be a $K$-triple over $M$. Let $\tilde{X}$ denote the horizontal lift of vector field $X\in {\cal X}(M)$ and $\nu\in A^0(\ad (P))$. The vector field $\tilde{X}+\xi^\nu$ is Killing on $(P,g_A)$ if and only if $X$ is a Killing field on $(M,g)$ and $\iota_XF_A=- \nabla^A\nu.$
\end{pr}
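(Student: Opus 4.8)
The plan is to decompose the Killing condition $L_{\tilde X + \xi^\nu} g_A = 0$ into its effect on the two $g_A$-orthogonal pieces of $T_P$, namely the horizontal bundle $A$ and the vertical bundle $V_P$, and to match these with the two conditions in the statement. First I would observe that, by Proposition \ref{betaleaveinvariantV}, the flow $(\beta_t)$ of $\tilde X + \xi^\nu$ always preserves the vertical distribution $V_P$, hence also its $g_A$-orthogonal complement $A$; so testing $L_{\tilde X+\xi^\nu} g_A$ on pairs of vectors can be organized as: (i) both vertical, (ii) both horizontal, (iii) one of each. I would handle these in turn.

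For the vertical-vertical case, I would use that $\xi^\nu$ is $K$-equivariant (Lemma \ref{XTILDEXINU}, second version, $[a^\#,\xi^\nu]=0$) and that by Lemma \ref{XTILDEXINU} (first version) $[\tilde X, \xi^\mu] = \xi^{\nabla^A_X \mu}$ for sections $\mu$, so that on vertical fields the flow acts, fiberwise, through the connection-induced transport on $\ad(P)$ together with the vertical flow of $\xi^\nu$ itself; since the fiber metric comes from the fixed $\ad$-invariant inner product on $\kg$ (which is preserved by $\ad_K$) and the fibers are totally geodesic, I expect this contribution to vanish identically, so the vertical-vertical part imposes no condition. For the horizontal-horizontal case, I would take horizontal lifts $\tilde Y, \tilde Z$ and compute, using $g_A(\tilde Y,\tilde Z) = (p^*g)(\tilde Y,\tilde Z) = g(Y,Z)\circ p$, that
\[
(L_{\tilde X+\xi^\nu} g_A)(\tilde Y, \tilde Z) = (\tilde X+\xi^\nu)\big(g(Y,Z)\circ p\big) - g_A([\tilde X+\xi^\nu,\tilde Y],\tilde Z) - g_A(\tilde Y,[\tilde X+\xi^\nu,\tilde Z]).
\]
Here $\xi^\nu(g(Y,Z)\circ p) = 0$ since $\xi^\nu$ is vertical and $g(Y,Z)\circ p$ is a pullback, and $\tilde X(g(Y,Z)\circ p) = (Xg(Y,Z))\circ p$. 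By Theorem \ref{momentap} (or rather its proof via Proposition \ref{InvariantLIFT}), the brackets $[\tilde X+\xi^\nu,\tilde Y]$ and $[\tilde X+\xi^\nu,\tilde Z]$ are horizontal \emph{precisely} when $\iota_X F_A = -\nabla^A\nu$; and when they are horizontal, equation (\ref{CURVATUREFURMOLA}) together with $[\xi^\nu,\tilde Y] = -\xi^{\nabla^A\nu(Y)}$ gives that the horizontal part of $[\tilde X,\tilde Y]$ is $\widetilde{[X,Y]}$, so the horizontal-horizontal component of the Lie-derivative reduces exactly to $(L_X g)(Y,Z)\circ p$. Thus, granting the moment map equation, the horizontal-horizontal part vanishes iff $X$ is Killing; and conversely, if the moment map equation fails then for suitable $Y$ the bracket $[\tilde X+\xi^\nu,\tilde Y]$ has a nonzero vertical component, and one can detect it by pairing against a vertical field — which is the mixed case — so it does contribute and the field is not Killing.

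The mixed (horizontal-vertical) case is the crux, and I expect it to be the main obstacle. I would evaluate $(L_{\tilde X+\xi^\nu} g_A)(\tilde Y, \xi^\mu)$ for a horizontal lift $\tilde Y$ and an arbitrary $K$-equivariant vertical field $\xi^\mu$. Since $g_A(\tilde Y, \xi^\mu) = 0$ (orthogonality of $A$ and $V_P$), this reduces to
\[
0 = (L_{\tilde X+\xi^\nu} g_A)(\tilde Y, \xi^\mu) = - g_A([\tilde X+\xi^\nu, \tilde Y], \xi^\mu) - g_A(\tilde Y, [\tilde X+\xi^\nu, \xi^\mu]),
\]
and by Lemmas \ref{XTILDEXINU} the second bracket $[\tilde X+\xi^\nu,\xi^\mu] = \xi^{\nabla^A_X\mu} + [\xi^\nu,\xi^\mu]$ is vertical, so $g_A(\tilde Y, [\tilde X+\xi^\nu,\xi^\mu]) = 0$. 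Hence the mixed condition is equivalent to $g_A([\tilde X+\xi^\nu,\tilde Y]^v, \xi^\mu) = 0$ for all $\mu$, i.e. to $[\tilde X+\xi^\nu, \tilde Y]$ having no vertical component — and by (\ref{CURVATUREFURMOLA}) and $[\xi^\nu,\tilde Y] = -\xi^{\nabla^A\nu(Y)}$ this vertical component is $-\xi^{F_A(X,Y)} + \xi^{\nabla^A_Y\nu} = -\xi^{F_A(X,Y) + \nabla^A_Y\nu}$... wait, I must be careful with signs here and recompute $[\xi^\nu,\tilde Y] = -[\tilde Y,\xi^\nu] = -\xi^{\nabla^A_Y\nu}$, so the vertical part of $[\tilde X+\xi^\nu,\tilde Y]$ is $-\xi^{F_A(X,Y)} - \xi^{\nabla^A_Y\nu} = -\xi^{(\iota_X F_A + \nabla^A\nu)(Y)}$, which vanishes for all $Y$ iff $\iota_X F_A = -\nabla^A\nu$. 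Therefore the mixed case is \emph{equivalent} to the moment map equation, with no condition on $X$. Assembling the three cases: the moment map equation alone handles the mixed part and makes the horizontal-horizontal part collapse to $(L_X g)\circ p$, which vanishes iff $X$ is Killing; the vertical-vertical part is automatic. Conversely if $\tilde X+\xi^\nu$ is Killing, the mixed case forces $\iota_X F_A = -\nabla^A\nu$, and then the horizontal-horizontal case forces $X$ Killing. I would close by citing Proposition \ref{InvariantLIFT} and Theorem \ref{momentap} to streamline the computation of vertical components of brackets rather than redoing it, and take care throughout that $\mu$ ranges over enough vertical fields to separate points of $V_P$ (the fundamental fields at each point suffice by Corollary \ref{xinujadid}).
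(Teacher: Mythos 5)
Your proof is correct, but it takes a genuinely different route from the paper. You argue infinitesimally, computing $L_{\tilde X+\xi^\nu}g_A$ block by block on the $g_A$-orthogonal splitting $T_P\simeq A\oplus V_P$: the mixed block isolates the vertical part $-\xi^{(\iota_XF_A+\nabla^A\nu)(Y)}$ of $[\tilde X+\xi^\nu,\tilde Y]$ and so is equivalent to the moment map equation, while the horizontal block collapses to $(L_Xg)\circ p$. The paper instead argues with flows: the flow of $\tilde X+\xi^\nu$ covers the flow of $X$ and, by Theorem \ref{momentap}, preserves the connection exactly when $\iota_XF_A=-\nabla^A\nu$, so Lemma \ref{RaisomP} converts ``$X$ Killing'' into ``the flow consists of local isometries''; for the converse it combines Proposition \ref{betaleaveinvariantV} with orthogonality to see that an isometric flow preserving $V_P$ preserves $A$, and invokes Theorem \ref{momentap} again. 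Your computation is more self-contained and makes visible that the moment map equation is precisely the vanishing of the mixed block, whereas the paper's argument avoids tensor calculations by reusing Lemma \ref{RaisomP}. Two points you could tighten: the horizontal-horizontal block equals $(L_Xg)(Y,Z)\circ p$ unconditionally, since the vertical parts of the brackets are $g_A$-orthogonal to $\tilde Y$ and $\tilde Z$, so no appeal to the moment map equation is needed there; and the vertical-vertical block is settled not by ``totally geodesic fibers'' but by testing on fundamental fields, which span $V_P$ pointwise: $g_A(a^\#,b^\#)=\llangle a,b\rrangle$ is constant and $[\tilde X,a^\#]=0=[\xi^\nu,a^\#]$ by Proposition \ref{alphaAtiez} and Lemma \ref{XTILDEXINU}, so that block vanishes identically.
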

\begin{proof}
 Let $(\beta_t)$ be the $1$-parameter group of local diffeomorphism generated by $Y:=\tilde X+\xi^\nu$, and let $(\alpha_t)$ denote the $1$-parameter group of local diffeomorphism generated by $X$. If $X$ is Killing and $\iota_XF_A=-\nabla^A\nu$, then $(\alpha_t)$ are local isometries of $(M,g)$ and the connection $A$ is invariant under $(\beta_t)$ using Theorem \ref{momentap}. From Lemma \ref{RaisomP} we deduce that $(\beta_t)$ are local isometries of $(P,g_A)$ which means that $Y$ is Killing. Conversely, if $Y=\tilde X+\xi^\nu$ is Killing then $X$ is Killing and we have $\beta_t^*g_A=g_A$ for all $t$. By Proposition \ref{betaleaveinvariantV} the vertical distribution is invariant under $(\beta_t)$, hence the horizontal distribution is invariant under $(\beta_t)$ which is equivalent to the generalized moment map equation $\iota_XF_A=-\nabla^A\nu$ by Theorem \ref{momentap}. 
 \end{proof}
\begin{re}
Let ${\cal G}(P)$ denote the gauge group of principal $K$-bundle $p:P \to M$ \cite{DK}, \cite{Te}, i.e. the group of $\id_M$-covering bundle automorphisms of $P$. The space of smooth sections of adjoint bundle $A^0(\ad (P))$ with point wise bracket can be considered as the Lie algebra of gauge group \cite{Bl}. If ${\cal G}^A(P)$ denotes the stabilizer of connection $A$ in the gauge group ${\cal G}(P)$, then ${\cal G}^A(P)\subset I^A_K(P)$. Moreover, thanks to Proposition \ref{KillingnU} the Lie algebra of ${\cal G}^A(P)$ is given by 
$$\mathrm{Lie}({\cal G}^A(P))=\{ \nu\in A^0(\ad(P)) |\, \nabla^A\nu=0\}.
$$  
 \end{re}

\begin{re}
Using Proposition \ref{KillingnU}, if for a Killing vector field $X\in \ig(M)$ there exists a section $\nu_0\in A^0(\ad(P))$ which satisfies in the generalized moment map equation, then $\tilde X+\xi^{\nu_0}$ is a fiber preserving Killing field on $(P,g_A)$.  
  \end{re}
	Suppose that $K$ is a connected, compact and semisimple Lie group endowed with a bi invariant metric, then any Killing vector field on $K$ is a sum of a right invariant vector field and a left invariant vector field on $K$ \cite{OT}.
\begin{lm} \label{VerKill}
Let $(g,P\textmap{p} M,A)$ be a $K$-triple over $M$. If $K$ is connected, compact and semisimple, then any vertical Killing field $Y$ on $(P,g_A)$ is decomposed as $Y=\xi^\nu+a^\#$, where $\nu\in A^0(\ad(P))$ is a $\nabla^A$-parallel section and $a\in \kg$.
\end{lm}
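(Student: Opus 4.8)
The idea is to reduce a vertical Killing field on $(P,g_A)$ to a fibrewise Killing field, invoke the structure of Killing fields on a compact connected semisimple Lie group, and then package the result back on $P$ using the $K$-equivariance forced by the metric. First I would observe that since $Y$ is vertical, $p_*Y=0$, so its flow preserves each fibre $P_x$; moreover, because $p:(P,g_A)\to(M,g)$ is a Riemannian submersion with totally geodesic fibres and $Y$ is tangent to the fibres, the restriction $Y|_{P_x}$ is a Killing field of the induced metric on $(P_x,g_A|_{P_x})$ for every $x\in M$. Via the diffeomorphism $\tau_y:K\to P_x$ (for a choice of $y\in P_x$), this induced metric is the bi-invariant metric on $K$, so $Y|_{P_x}$ corresponds to a Killing field on $(K,\text{bi-inv})$. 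By the cited result \cite{OT}, every such field is a sum $L+R$ of a left-invariant and a right-invariant vector field on $K$.

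The next step is to identify what the left- and right-invariant pieces become on $P$. The right-invariant vector fields on $K$, under $\tau_y$, correspond exactly to the restrictions to $P_x$ of the fundamental vector fields $a^\#$, $a\in\kg$; and they are independent of the choice of base point $y$. The left-invariant vector fields on $K$ correspond, under $\tau_y$, to the restrictions of the vertical fields $\xi^\nu$ with $\nu(y)\in\kg$ fixed along the fibre in the trivialization given by $y$ — but here the choice of $y$ matters, and changing $y$ to $yk$ conjugates the relevant element of $\kg$ by $\ad_{k^{-1}}$. So, at each point $x$, I can write $Y|_{P_x}=\xi^{\nu_x}|_{P_x}+a^\#|_{P_x}$ where $\nu_x\in A^0_x$ is a single element of the fibre $(\ad P)_x$ of the adjoint bundle (this is precisely the content of the decomposition being well-defined: the left-invariant part transforms equivariantly, hence descends to a point of $\ad(P)$) and $a^\#$ is genuinely global since $\kg$ is semisimple so $\kg=[\kg,\kg]$ has no center — here I should check that the "constant" right-invariant part can be taken the same for all $x$, which follows because as $x$ varies these are forced to be smooth and a locally constant section of the trivial bundle with fibre the center-free... more carefully: the right-invariant summand, being $K$-equivariant in a trivial way and smooth in $x$, is $a^\#$ for a locally constant, hence (on each component) constant, $a\in\kg$.

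Having produced $Y=\xi^\nu+a^\#$ with $\nu\in A^0(\ad(P))$ and $a\in\kg$, it remains to show $\nu$ is $\nabla^A$-parallel. For this I use that $Y$ is Killing on all of $P$, not just fibrewise. By Remark~\ref{aSharpIsKilling}, $a^\#$ is itself Killing, so $\xi^\nu=Y-a^\#$ is Killing on $(P,g_A)$. Now apply the Killing equation (or directly Proposition~\ref{KillingnU} with $X=0$): $\xi^\nu=\tilde 0+\xi^\nu$ is Killing iff the zero vector field on $M$ is Killing (trivially true) and $\iota_0 F_A=-\nabla^A\nu$, i.e. $\nabla^A\nu=0$. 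This gives exactly that $\nu$ is parallel, and completes the proof.

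The step I expect to be the main obstacle is the globalization and well-definedness in the second paragraph: verifying that the fibrewise decomposition $Y|_{P_x}=L_x+R_x$ from \cite{OT} patches into a global splitting $Y=\xi^\nu+a^\#$ with $\nu$ a genuine smooth section of $\ad(P)$ and $a$ a genuine element of $\kg$. One must check (i) smoothness in $x$ — both summands inherit smoothness from $Y$ once one fixes a smooth local section of $P$; (ii) that the left-invariant part really transforms by $\ad$ under change of local section, so defines a section of $\ad(P)$ rather than just a local object; and (iii) that the right-invariant part is locally constant — this is where compact semisimplicity (in particular $Z(\kg)=0$, equivalently that $\kg$ has no nonzero $\ad$-invariant vectors, so the "constant" freedom cannot drift) is genuinely used to pin $a$ down globally. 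Everything after the decomposition is then a one-line application of Proposition~\ref{KillingnU}.
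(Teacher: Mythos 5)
Your overall strategy is the same as the paper's: restrict $Y$ to a fibre, apply the Ochiai--Takahashi decomposition of a Killing field on a compact connected semisimple group with bi-invariant metric into a left-invariant plus a right-invariant field, turn one summand into $\xi^\nu$ with $\nu\in A^0(\ad(P))$ and the other into a fundamental field, and then extract $\nabla^A\nu=0$ from the Killing condition (the paper tests the Killing identity against $\tilde X$ and $c^\#$; your shortcut via Proposition~\ref{KillingnU} with $X=0$ is equivalent once the decomposition is in hand). However, your identification of the two summands is reversed. Under $\tau_y(k)=yk$ one has $a^\#_{yk}=\tau_{y*}\bigl(\tfrac{d}{dt}\big|_{t=0}\,k\exp(ta)\bigr)$, so the fundamental fields restrict on $P_x$ to the \emph{left}-invariant fields of $K$, and it is this identification that is independent of the base point (since $\tau_{yk_0}=\tau_y\circ l_{k_0}$ and left-invariant fields are invariant under left translations); conversely, for equivariant $\nu$ the field $\xi^\nu|_{P_x}$ corresponds to the \emph{right}-invariant field generated by $\nu(y)$, whose generator is conjugated by $\ad_{k_0^{-1}}$ when $y$ is replaced by $yk_0$ --- which is exactly why that summand defines a section of $\ad(P)$. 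Because the cited decomposition is symmetric in left and right, this swap does not derail the argument, but as written your statements about base-point independence and $\ad$-equivariance are attached to the wrong summands.

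The genuine gap is your point (iii): you need the generator of the fundamental-field part to be a single $a\in\kg$, independent of $x$, before you may invoke Remark~\ref{aSharpIsKilling} and conclude that $\xi^\nu=Y-a^\#$ is Killing; but ``smooth, hence locally constant'' is not an argument --- a nonconstant smooth map $M\to\kg$ is just as smooth. What the fibrewise analysis actually gives is $Y=\xi^\nu+\xi^{a\circ p}$ with $a:M\to\kg$ smooth, and the constancy of $a$ must be extracted from the Killing equation in horizontal directions: testing $Y$ against $Z_1=\tilde X$, $Z_2=c^\#$ (both sides of the Killing identity involving purely vertical--horizontal pairings vanish) and using the computation of Lemma~\ref{XTILDEXINU} gives $\nabla^A_X\nu+(Xa)\circ p=0$ on $P$; along a fibre the first term transforms by $\ad_{k^{-1}}$ while the second is constant, so, since semisimple $\kg$ has no nonzero $\ad_K$-invariant vectors, both terms vanish separately, yielding $da=0$ and $\nabla^A\nu=0$ at one stroke. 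This is where centerlessness is genuinely used, as you suspected, and once the argument is run this way the detour through Proposition~\ref{KillingnU} with $X=0$ is no longer needed. (The paper's own proof only verifies constancy of the relevant part along each fibre, so the omission is shared; but your proposal leans explicitly on global constancy of $a$ in its final step, so it must supply this argument.)
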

\begin{proof} For all $y\in P$ the map $\tau_y:K\to P_{p(y)}$ defined by $k\mapsto yk$ is an isometry. Denote by $T_y:\kg\to T_{y}P_{p(y)}\simeq (V_P)_y$  the derivation of $\tau_y$ at identity element of $K$. Since $Y$ is Killing and $\tau_y$ is isometry $T_y^{-1}(Y_y)\in \kg$ is also Killing for all $y\in P$. Using the fact that $K$ is connected, compact and semisimple, we have the following decomposition
$$T_y^{-1}(Y)=v_1(y)+v_2(y).
$$
where $v_1(y)$ is a right invariant vector field and $v_2(y)$ is a left invariant vector field on $K$. Put $Y_1:=T_y(v_1(y))$, $Y_2:=T_y(v_2(y))$. One can check that $Y_1$, $Y_2$ are well defined vertical field on $P$. We will show that there exist a section $\nu\in A^0(\ad(P))$ and an element $a\in\kg$ such that $Y_1=\xi^\nu$, $Y_2=a^\#$. Let $\nu:P\to \kg$ be a map defined at $y\in P$ by $\nu(y):= T_y^{-1}(Y_1)$. For any $k\in K$ we have $\tau_{yk}=\tau_y l_k$, where $l_k:K\to K$ denotes the left translation by $k\in K$. If we denote the derivation of $l_k$  by $L_k:\kg\to\kg$ then $T_{yk}=T_y\circ L_k$ and hence
\begin{equation*}
\begin{split}
\nu(yk)=T_{yk}^{-1}(Y_1)&=L_k^{-1}(T_y^{-1}(Y_1))\\
=L_k^{-1}(v_1(y))=&(L_k^{-1}R_k) v_1(y)=\ad_{k^{-1}}\nu(y).
\end{split}
\end{equation*}
Hence $\nu \in {\cal C}^K(P,\kg)\simeq A^0(\ad(P))$. So, there exist a section $\nu\in  A^0(\ad(P))$ such that $Y_1=\xi^\nu$. The vertical vector field $\xi^\nu$ is Killing if and only if for all vector fields $Z_1,Z_2 \in {\cal X}(P)$ one has
$$\xi^\nu g_A(Z_1,Z_2)=g_A([\xi^\nu,Z_1], Z_2)+g_A(Z_1,[\xi^\nu,Z_2]).
$$
Taking $Z_1=\tilde X$ and $Z_2=c^\#$ where $X$ is an arbitrary vector field on $M$ and $c\in \kg$ and using Lemma \ref{XTILDEXINU} we see that $\nabla^A_X\nu=0$, which means that $\nu$ is $\nabla^A$ parallel.
If $y,y'\in P$ are two arbitrary points in the same fiber, then there exist an element $k\in K$ such that $y'=yk$. This implies $\tau_{y'}=\tau_y\circ l_k$ and $T_{y'}=T_y\circ L_k$, so
\begin{equation*}
v_2(y')=T^{-1}_{y'}(Y_2(y'))=(L_k^{-1}T^{-1}_y)(Y_2(y))=L_k^{-1}(v_2(y))=v_2(y).
\end{equation*}
whence $v_2(y)$ is constant on the fiber and there exist $a\in\kg$ such that $Y_2=a^\#$.
 \end{proof}
 \begin{thry}\label{main2}
Let $K$ be a connected, compact and semisimple Lie group. Let $(g,P\textmap{p} M,A)$ be a $K$-triple over $M$. If $Y\in \ig_V(P)$ is a fiber preserving Killing vector field on $(P,g_A)$, then
\begin{enumerate}[(a)]
\item There exists a unique Killing vector field $X\in \ig(M)$ such that the horizontal projection of $Y$ is the horizontal lift $\tilde X$ of $X$. 
\item If the generalized moment map equation for $X$ admits a solution, then there exists a section $\nu\in A^0(\ad(P))$ and $a\in \kg$ such that $Y$ decompose as 
	$$Y=\tilde X+\xi^\nu+a^\# \ \ \ \hbox{where}\ \ \ \iota_XF_A=-\nabla^A\nu.
	$$
\end{enumerate}
\end{thry}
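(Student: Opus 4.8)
The plan is to peel off the vertical part of $Y$ using the decomposition $T_P \simeq A \oplus V_P$ and reduce everything to the results already established.

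First I would establish (a). Since $Y$ is fiber preserving, its horizontal projection $Y^h$ is well defined as a section of $A$, and because $Y$ descends to a vector field on $M$ (the flow maps fibers to fibers, so it covers a flow on $M$), there is a vector field $X \in {\cal X}(M)$ with $p_*(Y_y) = X_{p(y)}$ for all $y$; then $Y^h = \tilde X$ by the characterization of $g_A$ as a Riemannian submersion metric (the horizontal lift is the unique horizontal vector projecting to $X$). Uniqueness of $X$ is immediate from surjectivity of $p_*$. To see that $X$ is Killing on $(M,g)$, I would use that $p:(P,g_A)\to(M,g)$ is a Riemannian submersion: for basic (horizontal lift) vector fields the identity $g_A(\tilde X_1,\tilde X_2) = (p^*g)(\tilde X_1,\tilde X_2)$ holds, and one computes $L_Y$ applied to this, using that $Y$ is Killing on $P$, that $[Y,\tilde Z]$ has horizontal projection $\widetilde{[X,Z]}$ (which follows from $Y^h=\tilde X$ and equation~(\ref{CURVATUREFURMOLA}) together with Lemma~\ref{XTILDEXINU}), to deduce the Killing identity for $X$ on $M$; alternatively, invoke Proposition~\ref{KillingnU} after (b). I expect the cleanest route is: write $Y = \tilde X + (\text{vertical})$, apply the Killing equation of $Y$ to pairs of horizontal lifts, and read off $L_X g = 0$.

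Next, for (b), assume the generalized moment map equation $\iota_X F_A = -\nabla^A \nu_0$ has a solution $\nu_0 \in A^0(\ad(P))$. By Proposition~\ref{KillingnU}, $\tilde X + \xi^{\nu_0}$ is a (fiber preserving) Killing field on $(P,g_A)$. Therefore $Y' := Y - (\tilde X + \xi^{\nu_0})$ is again a Killing field on $(P,g_A)$, and it is vertical since both $Y$ and $\tilde X + \xi^{\nu_0}$ have the same horizontal projection $\tilde X$. Now apply Lemma~\ref{VerKill} to $Y'$: since $K$ is connected, compact and semisimple, $Y' = \xi^{\mu} + a^\#$ for some $\nabla^A$-parallel section $\mu \in A^0(\ad(P))$ and some $a \in \kg$. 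Setting $\nu := \nu_0 + \mu$, which still satisfies $\nabla^A \nu = -\iota_X F_A$ because $\nabla^A \mu = 0$, we get $Y = \tilde X + \xi^{\nu_0} + \xi^{\mu} + a^\# = \tilde X + \xi^{\nu} + a^\#$ with $\iota_X F_A = -\nabla^A\nu$, as required.

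The only genuine subtlety — and the place I would be most careful — is the passage in part (a) from "$Y$ fiber preserving Killing'' to "$Y^h$ is the horizontal lift of a \emph{Killing} field $X$''. The existence of $X$ with $Y^h = \tilde X$ is formal from the submersion structure and Definition~\ref{HorKill} (the flow of $Y$ preserves fibers, hence projects to a flow on $M$ whose generator is $X$); the Killing property of $X$ needs the fact that $p$ is a Riemannian submersion and that the $Y$-flow, being an isometry of $(P,g_A)$ preserving the fiber structure, descends to an isometry of $(M,g)$. Once (a) is in hand, (b) is essentially bookkeeping: the key idea is simply that Proposition~\ref{KillingnU} lets us subtract off a model Killing field with the correct horizontal part, reducing to the purely vertical case handled by Lemma~\ref{VerKill}. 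I would present (a) carefully and (b) briskly.
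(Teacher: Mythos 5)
Your proposal is correct and follows essentially the same route as the paper: part (b) is word-for-word the paper's argument (use Proposition \ref{KillingnU} to get the Killing field $\tilde X+\xi^{\nu_0}$, subtract it, and apply Lemma \ref{VerKill} to the vertical Killing remainder, absorbing the parallel section into $\nu$). The only divergence is in part (a), where the paper proves $[Y^h,a^\#]=0$ for all $a\in\kg$, so $Y^h$ is $K$-invariant and hence a horizontal lift by the standard result in Kobayashi--Nomizu, whereas you obtain $Y^h=\tilde X$ by descending the fiber-preserving flow to $M$; both arguments are valid, and your Riemannian-submersion justification that $X$ is Killing is actually more explicit than the paper's one-line assertion.
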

 \begin{proof}  (1) Decompose $Y$ into horizontal projection $Y^h$ and vertical projection $Y^v$. Since $Y^v$ is fiber preserving, so is $Y^h$, hence the Lie bracket $[Y^h,a^\#]$ is vertical. But the Lie bracket of a horizontal field and a fundamental field is horizontal \cite[Page. 65]{KN}. Therefore $[Y^h,a^\#]=0$ for all $a\in \kg$ which means that $(R_k)_*(Y^h)=Y^h$, for all $k\in K$. Using  \cite[Proposition 1.2]{KN}  there exist a unique vector field $X$ on $M$ such that $Y^h=\tilde X$. Because $Y$ is Killing and $p_*(Y)=X$, the vector field $X$ is also Killing. (2) If $X$ is Killing and there exist a section $\nu_0\in A^0(\ad(P))$ such that $ \iota_XF_A=-\nabla^A\nu_0$, then using Proposition \ref{KillingnU} we conclude that  $\tilde X+\xi^{\nu_0}$ is Killing. Therefore, $Y-(\tilde X+\xi^{\nu_0})$ is a vertical Killing field and Lemma \ref{VerKill} implies that there exist a $\nabla^A$-parallel section $\nu_1\in A^0(\ad(P))$ and an element $a\in \kg$ such that 
 $$Y-(\tilde X+\xi^{\nu_0})=\xi^{\nu_1}+a^\#.
 $$ 
 Putting $\nu:=\nu_0+\nu_1$ we obtain $Y=\tilde X+\xi^\nu+a^\#,$
 and we have $\iota_XF_A=-\nabla^A\nu$.
 \end{proof}
  %
%
%

\begin{pr}
 Let $(g,P\textmap{p} M,A)$ be a $K$-triple over $M$ with compact Lie group $K$. The Lie algebra of $K$-invariant Killing fields of $(P,g_A)$, denoted by $\mathfrak{i}_K(P)$, is a Lie subalgebra of $\mathfrak{i}(P)$ and is given by
$$\mathfrak{i}_K(P)=\{\tilde X+\xi^\nu|\, \iota_XF_A=-\nabla^A\nu \ ,\ \hbox{for  $(X,\nu)\in\ig(M)\times A^0(\ad (P))$}\}.
$$  
 	\end{pr}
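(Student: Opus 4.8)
The plan is to establish the set-theoretic equality in two inclusions, then observe the Lie subalgebra claim essentially for free. First I would fix notation: $\mathfrak{i}_K(P)$ consists of those $Y \in \mathfrak{i}(P)$ that are $K$-invariant, i.e.\ $(R_k)_* Y = Y$ for all $k \in K$. For the inclusion $\supseteq$, suppose $(X,\nu) \in \ig(M) \times A^0(\ad(P))$ satisfies $\iota_X F_A = -\nabla^A \nu$. By Proposition \ref{KillingnU} the field $\tilde X + \xi^\nu$ is Killing on $(P,g_A)$, so it lies in $\mathfrak{i}(P)$; it remains to check $K$-invariance. The horizontal lift $\tilde X$ is $K$-invariant by definition of a connection (it is the defining property that the horizontal distribution is $K$-invariant and $\tilde X$ is the unique invariant horizontal lift), and $\xi^\nu$ is $K$-equivariant by Lemma \ref{XTILDEXINU} (the second one, $[a^\#,\xi^\nu]=0$). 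Hence $\tilde X + \xi^\nu \in \mathfrak{i}_K(P)$.

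For the reverse inclusion $\subseteq$, take $Y \in \mathfrak{i}_K(P)$. Since $Y$ is $K$-invariant, the bracket $[Y, a^\#]$ measures the failure of invariance along the fibre directions and vanishes for all $a \in \kg$; in particular $[Y,Z]$ is vertical for every vertical $Z$, so $Y$ is fibre preserving. Decompose $Y = Y^h + Y^v$ into its $g_A$-orthogonal horizontal and vertical parts; both are individually $K$-invariant because the splitting $T_P = A \oplus V_P$ is $K$-equivariant. Now $Y^h$ is a $K$-invariant horizontal field, so by \cite[Proposition 1.2]{KN} it is the horizontal lift $\tilde X$ of a unique $X \in {\cal X}(M)$, and since $p$ is a Riemannian submersion and $Y$ is Killing, $X \in \ig(M)$ (this is the argument already used in the proof of Theorem \ref{main2}(1)). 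The vertical part $Y^v$ is a $K$-invariant vertical field, so by Corollary \ref{xinujadid} we have $Y^v = \xi^\nu$ for some $\nu \in A^0(\ad(P))$. Thus $Y = \tilde X + \xi^\nu$, and Proposition \ref{KillingnU} applied to this Killing field forces $\iota_X F_A = -\nabla^A \nu$.

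Finally, for the Lie subalgebra assertion: $\mathfrak{i}_K(P)$ is the intersection of $\mathfrak{i}(P)$ with the $K$-invariant vector fields, and both are Lie subalgebras of ${\cal X}(P)$ (invariance under the $K$-action is preserved by brackets since the $R_k$ are diffeomorphisms), so the intersection is a Lie subalgebra. I expect no genuine obstacle here; the only point that needs a word of care is ensuring the orthogonal horizontal/vertical decomposition of a $K$-invariant field stays within $K$-invariant fields, which follows since $R_k$ is an isometry (Lemma \ref{RaisomP}(a)) preserving both subbundles. Everything else is a direct assembly of Proposition \ref{KillingnU}, Corollary \ref{xinujadid}, and the cited facts from \cite{KN}.
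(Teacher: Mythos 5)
Your proof is correct and follows essentially the same route as the paper: decompose the $K$-invariant Killing field into its horizontal and vertical parts, identify $Y^h=\tilde X$ and $Y^v=\xi^\nu$ via Corollary \ref{xinujadid}, and conclude with Proposition \ref{KillingnU}. The only (welcome) difference is that the paper obtains $Y^h=\tilde X$ by citing Theorem \ref{main2}, which is stated for semisimple $K$, whereas you rederive that step directly from $K$-invariance, $[Y,a^\#]=0$, and \cite[Proposition 1.2]{KN}, so your argument matches the weaker hypothesis (compact $K$) of the proposition.
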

\begin{proof}
 	If $Y\in \mathfrak{i}_K(P)$, then by Theorem \ref{main2} there exists a unique Killing vector field $X\in \ig(M)$ such that the horizontal projection $Y^h$ is equal to $\tilde X$. As $Y$ and $Y^h$ are $K$-invariant Killing fields, the vertical projection $Y^v$ is also $K$-invariant. Since the action of $K$ on $P$ is free the map $\nu:P\to \kg$ is equivariant if and only if $\xi^\nu$  is invariant. Using corollary \ref{xinujadid} there exists a section $\nu\in A^0(\ad (P))$ such that $Y=\xi^\nu$. Hence, $Y=\tilde X+\xi^\nu$ and Proposition \ref{KillingnU} implies that $\iota_XF_A=-\nabla^A\nu$. Conversely, for a Killing field $X\in \ig(M)$ and a section $\nu\in A^0(\ad (P))$ which satisfy the generalized moment map equation, we have $\tilde X+\xi^\nu\in\ig_K(P)$.

 	\end{proof}
\section{Examples and applications}
\subsection{Fiber preserving Killing vector field of orthogonal frame bundle} Let $(M,g)$ be an oriented Riemanian manifold and $P=\mathrm{SO}(M)$ its principal bundle of orthonormal frame. Let $A$ be the Levi-Civita connection on $P$ and endow it by connection metric $g_A$. The induced linear connection on associated bundle $T_M\simeq P\times_{\mathrm{SO}(n)}\R^n$ coincide with the Levi-Civita  connection of metric $g$ and is denoted by $\nabla:A^0(T_M)\to A^1(T_M)$. The connection $A$ induces a linear connection $\nabla^A$ on the adjoint bundle $\ad(P)=\End(T_M)$. The Riemann curvature tensor is $F_A\in A^2(\End (T_M))$. If $X\in\ig(M)$ is a Killing vector field on $M$ then using \cite[Lemma 6.1]{Sa} we have $\iota_XF_A=-\nabla^{A}\nabla X$. Thanks to Proposition \ref{KillingnU}, it follows that the vector field $X^L:=\tilde X+\xi^{\nabla X}$ is Killing and will be called the natural lift of $X$. We have the following theorem.
\begin{pr} \cite[Theorem A.]{TaYa} Let $\mathrm{SO}(M)$ be the frame bundle of the oriented Riemannian manifold $(M,g)$. If $Y$ is a fiber preserving Killing vector field on $(\mathrm{SO}(M),g_A)$, then $Y$ can be decomposed as 
$$Y=X^L+\xi^\nu+a^\#.
$$
where $X^L$ is the natural lift of a Killing field $X$ on $M$, $\nu$ is a $\nabla^A$-parallel section of $\End(T_M)$ and $a\in \mathfrak{so}(n)$.
\end{pr}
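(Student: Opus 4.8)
The plan is to deduce the statement as a direct specialization of Theorem~\ref{main2} to the $K$-triple $(g,\mathrm{SO}(M)\textmap{p} M,A)$ with structure group $K=\mathrm{SO}(n)$ and $A$ the Levi-Civita connection, the only additional input being the curvature identity recalled just above the statement: for a Killing field $X$ on $M$ the section $\nabla X\in A^0(\End(T_M))$ solves the generalized moment map equation, $\iota_X F_A=-\nabla^A\nabla X$ (this is \cite[Lemma~6.1]{Sa}). In other words, once the geometric data are identified correctly, the assertion is essentially a bookkeeping reformulation of Theorem~\ref{main2}.

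First I would check that the hypotheses of Theorem~\ref{main2} are in force: $\mathrm{SO}(n)$ is compact and connected, and semisimple for $n\ge 3$ (the cases $n\le 2$ being either trivial or requiring separate treatment, since $\mathrm{SO}(2)$ is abelian), the induced connection $\nabla^A$ on $\ad(\mathrm{SO}(M))=\End(T_M)$ is the one coming from the Levi-Civita connection, and $F_A$ is the Riemann curvature tensor, so that \cite[Lemma~6.1]{Sa} applies in the stated form. Granting this, let $Y$ be a fiber preserving Killing vector field on $(\mathrm{SO}(M),g_A)$. By Theorem~\ref{main2}(a) there is a unique Killing field $X\in\ig(M)$ whose horizontal lift $\tilde X$ is the horizontal projection of $Y$. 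Since $\nabla X$ is a solution of the generalized moment map equation for $X$, the hypothesis of Theorem~\ref{main2}(b) is met, and hence there exist $\bar\nu\in A^0(\End(T_M))$ and $a\in\mathfrak{so}(n)$ with
$$Y=\tilde X+\xi^{\bar\nu}+a^\#,\qquad \iota_X F_A=-\nabla^A\bar\nu.$$

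It remains to rewrite the horizontal part in terms of the natural lift $X^L=\tilde X+\xi^{\nabla X}$. Setting $\nu:=\bar\nu-\nabla X$, linearity of the assignment $\mu\mapsto\xi^\mu$ gives $\tilde X+\xi^{\bar\nu}=X^L+\xi^{\nu}$, so $Y=X^L+\xi^{\nu}+a^\#$; and
$$\nabla^A\nu=\nabla^A\bar\nu-\nabla^A\nabla X=-\iota_X F_A+\iota_X F_A=0,$$
so $\nu$ is a $\nabla^A$-parallel section of $\End(T_M)$, which is the claimed decomposition. The one point deserving genuine care is not an obstruction in the argument but the identifications invoked in the second paragraph — that the connection metric $g_A$ on $\mathrm{SO}(M)$ is the Sasaki-type metric considered in \cite{Sa,TaYa}, and that $\nabla^A$ and $F_A$ on $\End(T_M)$ coincide with the Levi-Civita connection and the Riemann tensor — after which the statement follows at once from Theorem~\ref{main2}.
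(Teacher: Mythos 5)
Your proposal is correct and follows essentially the same route as the paper: apply Theorem~\ref{main2} to the triple $(g,\mathrm{SO}(M)\to M,A)$, using Sakai's identity $\iota_XF_A=-\nabla^A\nabla X$ to provide the solution $\nabla X$ of the generalized moment map equation, and then rewrite the resulting decomposition in terms of the natural lift $X^L$. Your version is in fact slightly more explicit than the paper's (which simply says Theorem~\ref{main2} completes the proof): you verify directly that $\nu:=\bar\nu-\nabla X$ is $\nabla^A$-parallel rather than relying on the internal structure of the proof of Theorem~\ref{main2}(b), and your remark that semisimplicity of $\mathrm{SO}(n)$ requires $n\ge 3$ is a legitimate caveat the paper passes over in silence.
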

\begin{proof} As $Y$ is a fiber preserving Killing field on $\mathrm{SO}(M)$, there exists a Killing field $X\in \ig(M)$ such that $Y^h=\tilde X$. Using \cite[Lemma 6.1]{Sa}, since $X\in\ig(M)$ is Killing we have $\iota_XF_A=-\nabla^{A}\nabla X$. Regarding $\nabla X$ as a section of $\End(T_M) \simeq \ad(\mathrm{SO}(M) ) $, the generalized moment map equation $\iota_XF_A=-\nabla^A\nu$ admits a solution $\nu_0:=\nabla X\in A^0(\End(T_M))$. Using Proposition \ref{KillingnU} we deduce that the vector field $X^L:=\tilde X+\xi^{\nabla X}$ is Killing and Theorem \ref{main2} complete the proof.  
\end{proof}
\subsection{ Quantiziable symplectic manifold}
A symplectic manifold $(M,\omega)$ is called quantiziable if and only if there is a principal circle bundle $p:P\to M$ over $M$ and a 1-form $\alpha\in A^1(P,\R)$ which is invariant under the action of $\mathbb S^1$ and $d\alpha=p^*\omega$. One can prove that $(M,\omega)$ is quantiziable if and only if
$$\frac{1}{2\pi}[\omega]\in H^2(M,\Z).
$$
For a proof of this result see \cite[Page. 440]{AM}. By the classical Chern-Weil theory there exist a connection $A$ on $P$ for which the connection form is given by $\theta_A=\alpha$ and the curvature form is $F_A=\omega $. The circle bundle $P$ is called the quantizing manifold. In this case, the space of sections of the adjoint bundle $A^0(\ad (P))$ can be identified with ${\cal C^\infty}(M)$. Let $\nu \mapsto f_\nu$ denote this identification. If $\tilde{X}$ denotes the horizontal lift of a Killing field $X$ on $M$ and $\nu\in A^0(\ad (P))$, then using Proposition \ref{KillingnU} we deduce that the vector field $\tilde X+\xi^\nu$ is Killing with respect to $g_A$ if and only if $X$ and $\nu$ satisfy the generalized moment map equation. This is to say that $X$ is a Hamiltonian vector field with Hamiltonian function $H:M\to\R$ given by $H=-f_\nu+c$, where $c\in\R$ is a constant. 

\end{document}